\newtheorem{theorem}{Theorem}[section]
\newtheorem{corollary}[theorem]{Corollary}
\newtheorem{lemma}[theorem]{Lemma}
\theoremstyle{definition}
\newtheorem{example}[theorem]{Example}
\numberwithin{equation}{section}
\begin{document}
\title[Asymptotic density of Beurling generalized integers]{On Diamond's $L^1$ criterion for asymptotic density of Beurling generalized integers}

\author[G. Debruyne]{Gregory Debruyne}
\thanks{G. Debruyne gratefully acknowledges support by Ghent University, through a BOF Ph.D. grant}
\address{Department of Mathematics: Analysis, Logic and Discrete Mathematics\\ Ghent University\\ Krijgslaan 281\\ 9000 Ghent\\ Belgium}
\email{gregory.debruyne@UGent.be}

\author[J. Vindas]{Jasson Vindas}
\thanks{The work of J. Vindas was supported by the Research Foundation--Flanders, through the FWO-grant number 1520515N}
\address{Department of Mathematics: Analysis, Logic and Discrete Mathematics\\ Ghent University\\ Krijgslaan 281\\  9000 Ghent\\ Belgium}
\email{jasson.vindas@UGent.be}

\subjclass[2010]{Primary 11N80; Secondary 11M41, 11M45, 11N37.}

\keywords{Asymptotic density; Beurling generalized numbers; Beurling primes; M\"{o}bius function; zeta function}

\begin{abstract} We give a short proof of the $L^{1}$ criterion for Beurling generalized integers to have a positive asymptotic density. We actually prove the existence of density under a weaker hypothesis. We also discuss related sufficient conditions for the estimate $m(x)=\sum_{n_{k}\leq x} \mu(n_k)/n_k=o(1)$, with $\mu$ the Beurling analog of the M\"{o}bius function.
\end{abstract}

\maketitle

\section{Introduction}
Let $\{p_{k}\}_{k=1}^{\infty}$ be a Beurling generalized prime number system, that is, an unbounded sequence of real numbers $p_{1}\leq p_{2}\leq p_3\leq  \dots$ subject to the only requirement $p_{1}>1$. Its associated set of generalized integers \cite{beurling,diamond-zhangbook} is the multiplicative semigroup generated by the generalized primes and 1. We arrange them in a non-decreasing sequence where multiplicities are taken into account, $1=n_0<n_{1}\leq n_{2}\leq \dots $. One then considers the counting functions of the generalized integers and primes,
\begin{equation*}
N(x)=\sum_{n_{k}\leq x}1 \quad \mbox{ and }\quad \pi(x)=\sum_{p_{k}\leq x}1.
\end{equation*}

A central question in the theory of generalized numbers is to determine conditions, as minimal as possible, on one of the functions $N(x)$ or $\pi(x)$ such that the other one becomes close to its classical counterpart. Starting with the seminal work of Beurling \cite{beurling}, the problem of finding requirements on $N(x)$ that ensure the validity of the prime number theorem $\pi(x)\sim x/\log x$ has been extensively investigated; see, for example, \cite{beurling,diamond-zhangbook,kahane1997,SP-V2012,zhang2015}. In the opposite direction, Diamond  proved in 1977 \cite{Diamond1977} the following important $L^1$ criterion for generalized integers to have a positive density.
As in the classical case, we denote
\begin{equation*}
 \Pi(x)=\pi(x)+\frac{1}{2}\pi(x^{1/2})+\frac{1}{3}\pi(x^{1/3})+\dots\: .
\end{equation*}
\begin{theorem}\label{gith1}
Suppose that 
\begin{equation}
\label{gieq1.1}
\int_{2}^{\infty}\left|\Pi(x)-\frac{x}{\log x}\right|\:\frac{\mathrm{d}x}{x^{2}}<\infty.
\end{equation}
Then, there is $a>0$ such that
\begin{equation}
\label{gieq1.2}
N(x)\sim ax.
\end{equation}
\end{theorem}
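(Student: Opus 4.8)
The plan is to examine the Beurling zeta function $\zeta(s)=\prod_{k}(1-p_{k}^{-s})^{-1}=\sum_{k}n_{k}^{-s}$ near the line $\Re s=1$ and then feed the resulting boundary information into a Tauberian theorem for the non-decreasing function $N$. Two Mellin--Stieltjes representations are available for $\Re s>1$,
\[
\zeta(s)=s\int_{1}^{\infty}N(x)\,x^{-s-1}\,\mathrm{d}x,\qquad \log\zeta(s)=s\int_{1}^{\infty}\Pi(x)\,x^{-s-1}\,\mathrm{d}x,
\]
the second obtained by taking logarithms in the Euler product and interchanging summation and integration. Set $E(x):=\Pi(x)-x/\log x$ for $x\ge 2$; the hypothesis \eqref{gieq1.1} says exactly that $\int_{2}^{\infty}|E(x)|\,x^{-2}\,\mathrm{d}x<\infty$, and this (with the elementary convergence of $\int_{2}^{\infty}x^{-s}(\log x)^{-1}\,\mathrm{d}x$ for $\Re s>1$) already forces $\log\zeta$, hence $\zeta$, to be finite for $\Re s>1$, so that $\zeta$ is holomorphic and zero-free there and the two representations are legitimate.

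The first substantive step is to isolate the singularity of $\log\zeta$ at $s=1$. Write $\log\zeta(s)$ as $s\int_{1}^{2}\Pi(x)\,x^{-s-1}\,\mathrm{d}x + s\int_{2}^{\infty}\frac{x^{-s}}{\log x}\,\mathrm{d}x + s\int_{2}^{\infty}E(x)\,x^{-s-1}\,\mathrm{d}x$. The first summand is entire. For the second, substituting $x=e^{u}$ and recognizing the exponential integral shows that it equals $-\log(s-1)$ plus a function holomorphic in $\Re s>1$ and continuous on $\Re s\ge 1$. For the third, $\big|s\int_{2}^{\infty}E(x)\,x^{-s-1}\,\mathrm{d}x\big|\le |s|\int_{2}^{\infty}|E(x)|\,x^{-2}\,\mathrm{d}x<\infty$ whenever $\Re s\ge 1$, so dominated convergence shows that it too extends continuously to $\Re s\ge 1$. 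Hence
\[
G(s):=\log\zeta(s)+\log(s-1)=\log\bigl((s-1)\zeta(s)\bigr)
\]
extends to a function holomorphic in $\Re s>1$ and continuous on the closed half-plane $\Re s\ge 1$. Consequently $(s-1)\zeta(s)=e^{G(s)}$ admits a continuous, zero-free extension to $\Re s\ge 1$; in particular $(s-1)\zeta(s)\to a:=e^{G(1)}$ as $s\to 1$, and $a>0$ since $G$ is real-valued on $(1,\infty)$.

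It remains to convert this into the asymptotics of $N$. Since $N$ is non-decreasing and $\zeta$ is its Mellin--Stieltjes transform, I would apply a Wiener--Ikehara type Tauberian theorem --- in the form requiring only that $(s-1)\zeta(s)$, rather than $\zeta(s)-a/(s-1)$ itself, extend continuously and zero-freely up to $\Re s=1$ (equivalently, that the distributional boundary value of $\zeta(s)-a/(s-1)$ on that line be locally a pseudofunction) --- to conclude $N(x)\sim ax$. This argument uses \eqref{gieq1.1} only through the continuity of $G$, so it in fact yields the density under that formally weaker hypothesis. I expect this last step to be the main obstacle: the ``clean'' Wiener--Ikehara hypothesis, that $\zeta(s)-a/(s-1)$ itself be continuous up to $\Re s=1$, genuinely fails here, since for a general $E\in L^{1}(\mathrm{d}x/x^{2})$ the function $G$ can be merely continuous (not H\"older, not even Dini-continuous), so $\zeta(s)-a/(s-1)=(e^{G(s)}-a)/(s-1)$ blows up, however slowly, as $s\to 1$. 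Away from $s=1$ the boundary value is the continuous function $(e^{G(1+it)}-a)/(it)$, so the whole difficulty is concentrated at $s=1$, where one must extract the local-pseudofunction property from the bare continuity of $G$ together with $e^{G(1)}=a$ (which removes the pole); the Mellin representations, the exponential-integral identity, and the dominated-convergence step are all routine.
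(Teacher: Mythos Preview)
Your overall strategy coincides with the paper's: both pass to $\log\zeta(s)$, isolate the $-\log(s-1)$ singularity, set $a=e^{G(1)}$ with $G(s)=\log\bigl((s-1)\zeta(s)\bigr)$, and then appeal to the distributional Wiener--Ikehara theorem (Theorem~\ref{W-IMellinth1}) for the non-decreasing $N$. The divergence is precisely at the point you yourself flag as the obstacle, and it is a genuine gap.

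You prove only that $G$ extends \emph{continuously} to $\Re e\,s=1$, and then assert that this is ``equivalently, that the distributional boundary value of $\zeta(s)-a/(s-1)$ on that line be locally a pseudofunction.'' That equivalence is false: bare continuity of $G$ (even with $G(1)=\log a$) does not force $\bigl(e^{G(s)}-a\bigr)/(s-1)$ to have local pseudofunction boundary behavior at $s=1$. There is no Wiener--Ikehara theorem whose hypothesis is merely the continuous, zero-free extension of $(s-1)\zeta(s)$; one needs the pseudofunction condition, and you have not derived it. Your closing paragraph in effect acknowledges the argument is incomplete.

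The repair is already hidden in your own computation. The third summand in your decomposition is, on $\Re e\,s=1$,
\[
(1+it)\int_{2}^{\infty}E(x)\,x^{-2-it}\,\mathrm{d}x=(1+it)\int_{\log 2}^{\infty}E(e^{u})e^{-u}\,e^{-itu}\,\mathrm{d}u,
\]
and hypothesis~\eqref{gieq1.1} says $u\mapsto E(e^{u})e^{-u}$ lies in $L^{1}$. Hence this boundary function lies in the \emph{local Wiener algebra} $A_{loc}(\mathbb{R})$, not merely in $C(\mathbb{R})$; the same is true of your other two summands, so $G(1+it)\in A_{loc}(\mathbb{R})$. This is exactly what the paper exploits. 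From $A_{loc}$-membership one concludes the required local pseudofunction boundary behavior of $\zeta(s)-a/(s-1)$ either via the explicit factorization~\eqref{gieq3.4}--\eqref{gieq3.5} together with Lemma~\ref{gil1} (closure of $A_{loc}$ under composition with entire functions, and $A_{loc}\cdot PF_{loc}\subset PF_{loc}$), or via Lemma~\ref{gil2}, which shows directly that if $G(1+it)\in A_{loc}(U)$ then $(G(s)-G(1))/(s-1)$ has $PF_{loc}$ boundary behavior on $1+iU$. Either route uses the $L^{1}$-Fourier structure in an essential way; continuity alone does not carry the argument.
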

It can be shown (see \cite[Thm. 5.10 and Lemma 5.11, pp. 47--48]{diamond-zhangbook}) that the value of the constant $a$ in (\ref{gieq1.2}) is given by
\[
\log a= \int_{1}^{\infty} x^{-1}\left(\mathrm{d}\Pi(x)-\frac{1-1/x}{\log x}\: \mathrm{d}x\right).
\]

Diamond's proof of Theorem \ref{gith1} is rather involved. It depends upon subtle decompositions of the measure $\mathrm{d}\Pi$ and then an iterative procedure. In their recent book \cite[p. 76]{diamond-zhangbook}, Diamond and Zhang have asked whether there is a simpler proof of this theorem. 

 The goal of this article is to provide a short proof of Theorem \ref{gith1}. Our proof is of Tauberian character. It is based on the analysis of the boundary behavior of the zeta function 
 $$
 \zeta(s)=\int_{1^{-}}^{\infty}x^{-s}\mathrm{d}N(x)
 $$
via local pseudofunction boundary behavior and then an application of the distributional version of the Wiener-Ikehara theorem \cite{Debruyne-VindasWiener-Ikehara,korevaar2005}. Our method actually yields \eqref{gieq1.2} under a weaker hypothesis than \eqref{gieq1.1}, see Theorem \ref{gith2} in Section \ref{proof Diamond's theorem}. 
We mention that Kahane has recently obtained another different proof yet of Theorem \ref{gith1} in \cite{kahane2017}.

In Section \ref{section gi Mobius}, we apply our Tauberian approach to study the estimate
\begin{equation}
\label{gieq1.3}
m(x)=\sum_{n_{k}\leq x}\frac{\mu(n_{k})}{n_{k}}=o(1),
\end{equation}
with $\mu$ the Beurling analog of the M\"{o}bius function. The sufficient conditions we find here for \eqref{gieq1.3} generalize the ensuing recent result of Kahane and Sa\"{i}as \cite{Kahane-Saias2016a,Kahane-Saias2016b}: the $L^1$ hypothesis (\ref{gieq1.1}) suffices for the estimate \eqref{gieq1.3}.

\section{Tauberian machinery}
\label{section preli}
We collect in this section some Tauberian theorems that play a role in the article. These Tauberian theorems are in terms of local pseudofunction boundary behavior \cite{Debruyne-VindasWiener-Ikehara, Debruyne-VindasComplexTauberians,  korevaar2005,SP-V2012}, which turns out to be an optimal assumption for many complex Tauberian theorems, in the sense that it often leads to ``if and only if''  results. 

We normalize Fourier transforms as $\hat{\varphi}(t)=\mathcal{F}\{\varphi;t\}=\int_{-\infty}^{\infty}e^{-itx}\varphi(x)\:\mathrm{d}x$, and  interpret them in the sense of tempered distributions when the integral definition does not make sense. The standard Schwartz test function spaces of compactly supported smooth functions (on an open subset $U\subseteq \mathbb{R}$)  and rapidly decreasing smooth functions are denoted by $\mathcal{D}(U)$ and $\mathcal{S}(\mathbb{R})$, while $\mathcal{D}'(U)$ and $\mathcal{S}'(\mathbb{R})$ stand for their topological duals, the spaces of distributions and tempered distributions \cite{bremermann}.  We write $\langle f,\varphi\rangle$, or $\langle f(x),\varphi(x)\rangle$ with the use of a dummy variable of evaluation,  for the dual pairing between a distribution $f$ and a test function $\varphi$; as usual, locally integrable functions are regarded as distributions via $\langle f(x),\varphi(x)\rangle=\int_{-\infty}^{\infty}f(x)\varphi(x)\mathrm{d}x$.  

Denote as $A(\mathbb{R})=\mathcal{F}(L^{1}(\mathbb{R}))$ the Wiener algebra, its dual $PM(\mathbb{R})=\mathcal{F}(L^{\infty}(\mathbb{R}))$ is the space of global pseudomeasures. We call $f\in PM(\mathbb{R})$ a global pseudofunction if additionally $\lim_{|x|\to\infty} \hat{f}(x)=0$, and write $f\in PF(\mathbb{R})$. A Schwartz distribution $g\in\mathcal{D}'(U)$ is said to be a local pseudofunction on an open set $U$ if every point of $U$ has a neighborhood where $g$ coincides with a global pseudofunction; we then write $g\in PF_{loc}(U)$.  Equivalently, the latter holds if and only if  $\lim_{|x|\to\infty} \widehat{\varphi g}(x)=0$ for every $\varphi\in \mathcal{D}(U)$. One defines similarly the local Wiener algebra $A_{loc}(U)$ of continuous functions. Note that ${A}_{loc}(U)$ is an algebra under pointwise multiplication, while ${PF}_{loc}(U)$ has a natural ${A}_{loc}(U)$-module structure. Since $C^{\infty}(U)\subset {A}_{loc}(U)$, we obtain that smooth functions are multipliers for ${A}_{loc}(U)$ and ${PF}_{loc}(U)$. Also, $L^{1}_{loc}(U)\subsetneq PF_{loc}(U)$, in view of the Riemann-Lebesgue lemma.

Let $G(s)$ be analytic on the half-plane $\Re e\:s>1$ and let $U\subset \mathbb{R}$ be open. We say that $G$ has local pseudofunction boundary behavior on the boundary open subset $1+iU$ if  $G$ admits a local pseudofunction as distributional boundary value on $1+i U$, that is, if there is $g\in PF_{loc}(U)$ such that 
\begin{equation*}
\lim_{\sigma\to1^{+}}\int_{-\infty}^{\infty}G(\sigma+it)\varphi(t)\mathrm{d}t=\left\langle g(t),\varphi(t)\right\rangle, \quad \mbox{for each } \varphi\in\mathcal{D}(U).
\end{equation*}
If $U=\mathbb{R}$, we say that $G$ has local pseudofunction boundary behavior on $\Re e\: s=1$. We often write $g(t)=G(1+it)$ for its boundary value distribution.
Likewise, one defines boundary behavior with respect to other spaces such as $A_{loc}$ or $L^1_{loc}$. We emphasize that $L^1_{loc}$-boundary behavior, continuous, or analytic extension are very special cases of local pseudofunction boundary behavior.

We will employ the following distributional version of the Wiener-Ikehara theorem, due to Korevaar \cite{korevaar2005}. (See \cite{Debruyne-VindasWiener-Ikehara,Debruyne-VindasComplexTauberians} for more general results.)

\begin{theorem} \label{W-IMellinth1} Let $S$ be a non-decreasing function having support in $[1,\infty)$. Then, 
$$S(x)\sim ax\: $$
 if and only if its Mellin-Stieltjes transform $\int^{\infty}_{1^{-}} x^{-s} \mathrm{d}S(x)$ converges for $\Re e \: s > 1$
and 
\begin{equation*}
\int^{\infty}_{1^{-}} x^{-s} \mathrm{d}S(x)- \frac{a}{s-1}
\end{equation*}
 admits local pseudofunction boundary behavior on the line $\Re e \: s = 1$.   
\end{theorem}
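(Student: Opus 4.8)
I would deal with the ``only if'' direction first, since it is routine. If $S(x)\sim ax$ then $S(x)=O(x)$, and an integration by parts shows that $L(s):=\int_{1^{-}}^{\infty}x^{-s}\,\mathrm{d}S(x)$ converges absolutely for $\Re e\: s>1$, with $\int_{1}^{\infty}S(y)y^{-s-1}\,\mathrm{d}y=L(s)/s$ there by Fubini. Writing $V(u)=S(e^{u})$, $\psi(u)=(e^{-u}V(u)-a)H(u)$ with $H$ the Heaviside function, and $F(s)=L(s)-a/(s-1)$, the change of variables $y=e^{u}$ produces the identity
\[
\mathcal{F}\{e^{-\epsilon u}\psi(u);t\}=\frac{F(1+\epsilon+it)-a}{1+\epsilon+it},\qquad \epsilon:=\Re e\: s-1>0.
\]
Since $\psi\in L^{\infty}(\mathbb{R})$ and $\psi(u)\to0$ as $u\to+\infty$, $\mathcal{F}\{\psi\}$ lies in $PF(\mathbb{R})$; letting $\epsilon\to0^{+}$ in the identity (the left side tends to $\mathcal{F}\{\psi\}$ in $\mathcal{S}'(\mathbb{R})$ by dominated convergence, and $1+\epsilon+it\to1+it$ in $C^{\infty}$) shows that $F$ has local pseudofunction boundary behaviour on $\Re e\: s=1$, which is necessity.

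For the ``if'' direction I keep the notation. The identity above is still valid for $\epsilon>0$, so by hypothesis its right-hand side converges in $\mathcal{D}'(\mathbb{R})$, as $\epsilon\to0^{+}$, to some $g\in PF_{loc}(\mathbb{R})$; here I use that $PF_{loc}(\mathbb{R})$ is a module over $C^{\infty}(\mathbb{R})$ and contains the constants. The target is $\psi(u)\to0$ as $u\to\infty$, equivalent to $S(x)\sim ax$. I also record the one-sided Tauberian information: since $V$ is non-decreasing, $\psi(v)+a\ge e^{-(v-u)}(\psi(u)+a)$ for $0\le u\le v$, so $\psi$ is slowly decreasing once it is known to be bounded.

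The core is the following. Fix $K\in L^{1}(\mathbb{R})$ with $\widehat{K}\in\mathcal{D}(\mathbb{R})$; then $e^{-\epsilon\cdot}\psi\in L^{1}(\mathbb{R})$ — this uses only the crude \emph{a priori} bound $V(u)=O_{\epsilon'}(e^{(1+\epsilon')u})$ for every $\epsilon'>0$, which follows from the convergence of $L$ at $1+\epsilon'$ — so $(e^{-\epsilon\cdot}\psi)*K=\mathcal{F}^{-1}(\mathcal{F}\{e^{-\epsilon\cdot}\psi\}\,\widehat{K})$. Passing to $\epsilon\to0^{+}$, the left side converges pointwise to $\psi*K$ by dominated convergence (using the Schwartz decay of $K\in\mathcal{F}^{-1}(\mathcal{D})$ against the crude bound), while $\mathcal{F}\{e^{-\epsilon\cdot}\psi\}\,\widehat{K}\to g\widehat{K}$ in $\mathcal{E}'(\mathbb{R})\subset\mathcal{S}'(\mathbb{R})$; hence $\psi*K=\mathcal{F}^{-1}(g\widehat{K})$. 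As $g\in PF_{loc}(\mathbb{R})$ and $\widehat{K}\in\mathcal{D}(\mathbb{R})$, the product $g\widehat{K}$ is a compactly supported pseudofunction, so $\mathcal{F}^{-1}(g\widehat{K})$ is continuous and vanishes at $\pm\infty$; in particular $\psi*K$ is bounded and $(\psi*K)(x)\to0$ as $|x|\to\infty$. I then conclude in two steps. \emph{(i) Boundedness.} Pick a fixed $K_{0}\ge0$ with $\int K_{0}=1$, $\widehat{K_{0}}\in\mathcal{D}(\mathbb{R})$, and $c:=\int_{-\infty}^{0}e^{v}K_{0}(v)\,\mathrm{d}v>0$ (e.g.\ $K_{0}=|G|^{2}/\|G\|_{2}^{2}$ with $\widehat{G}\in\mathcal{D}(\mathbb{R})$, $G$ real and even). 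Since $V$ is non-decreasing, $\psi(x-v)\ge e^{v}e^{-x}V(x)-a$ for $v\le0$ while $\psi(x-v)\ge-a$ for all $v$, so $(\psi*K_{0})(x)\ge c\,e^{-x}V(x)-a$; as the left side is bounded, $e^{-x}V(x)=O(1)$, i.e.\ $\psi\in L^{\infty}(\mathbb{R})$. \emph{(ii) Conclusion.} With $\psi$ bounded, the relation $(\psi*K)(x)\to0$ extends from $\{K:\widehat{K}\in\mathcal{D}(\mathbb{R})\}$ to every $K\in L^{1}(\mathbb{R})$ by density and the estimate $\|\psi*(K-K')\|_{\infty}\le\|\psi\|_{\infty}\|K-K'\|_{1}$. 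Taking $K=\delta^{-1}\mathbf{1}_{[-\delta,0]}$ and $K=\delta^{-1}\mathbf{1}_{[0,\delta]}$ yields $\delta^{-1}\int_{x}^{x+\delta}\psi\to0$ and $\delta^{-1}\int_{x-\delta}^{x}\psi\to0$ as $x\to\infty$; together with the slow decrease of $\psi$ this forces $\limsup_{u\to\infty}\psi(u)\le0$ and $\liminf_{u\to\infty}\psi(u)\ge0$, so $\psi(u)\to0$ and $S(x)\sim ax$.

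The step I expect to be the main obstacle is (i) together with the limit interchange $\psi*K=\mathcal{F}^{-1}(g\widehat{K})$: before boundedness of $\psi$ is available one must work with the regularised transforms $\mathcal{F}\{e^{-\epsilon\cdot}\psi\}$ and justify moving $\epsilon\to0^{+}$ through the convolution using nothing more than the crude exponential bound on $V$ and the rapid decay of the chosen kernels, and then identify the pointwise limit with the $\mathcal{S}'$-limit dictated by the boundary value $g$. Once those points are settled, the positivity trick in (i) and the slowly-decreasing Tauberian endgame in (ii) are classical.
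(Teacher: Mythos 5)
The paper does not prove this theorem; it is quoted from Korevaar (reference [korevaar2005]), so there is no internal proof to compare against. I will therefore review your argument on its own merits.

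Your overall strategy — exhibit the identity $\mathcal{F}\{e^{-\epsilon\cdot}\psi;t\}=(F(1+\epsilon+it)-a)/(1+\epsilon+it)$, use the pseudofunction hypothesis to control $\psi\ast K$ for kernels with $\widehat K\in\mathcal{D}$, extract boundedness of $\psi$ by a positivity trick, and finish with a slowly-decreasing Tauberian argument — is correct and is essentially the standard route. The ``only if'' direction is clean. However, the ``if'' direction has a genuine gap at exactly the place you flag. You invoke dominated convergence to conclude $(e^{-\epsilon\cdot}\psi)\ast K\to\psi\ast K$ pointwise ``using the Schwartz decay of $K$ against the crude bound,'' but the crude a priori bound only gives $\psi(u)=O_{\epsilon'}(e^{\epsilon' u})$ for every $\epsilon'>0$. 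Schwartz decay of $K$ is polynomial (faster than any power, not exponential), so for any subexponential but unbounded $\psi$ — say $\psi(u)\asymp e^{\sqrt u}$, which the crude bound permits — the would-be dominant $|\psi(x-\cdot)K(\cdot)|$ is not integrable and indeed $\psi\ast K$ may not even be finite. Since step (i) (boundedness of $\psi$) is derived from the identity $\psi\ast K_0=\mathcal{F}^{-1}(g\widehat{K_0})$, and that identity needs the dominated-convergence step, and that step needs boundedness, the argument as written is circular.

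The gap is repairable, and the repair is worth recording because it is the crux of the theorem. Do not pass to the limit on both sides; instead observe that the integrand $e^{-\epsilon(x-v)}\psi(x-v)+a$ is non-negative for all $v$ (since $e^{-\epsilon(x-v)}\psi(x-v)+a=e^{-\epsilon(x-v)}e^{-(x-v)}V(x-v)+a(1-e^{-\epsilon(x-v)})\ge 0$ on $x-v\ge0$, and equals $a$ on $x-v<0$). Apply Fatou's lemma with respect to $\epsilon\to0^{+}$, using that the regularised side converges pointwise to the bounded function $h:=\mathcal{F}^{-1}(g\widehat{K_0})$ (this pointwise convergence is legitimate because $\mathcal{F}\{e^{-\epsilon\cdot}\psi\}\widehat{K_0}\to g\widehat{K_0}$ in $\mathcal{E}'$ with a fixed compact support, so the Fourier--Laplace transforms converge pointwise). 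This yields $c\,e^{-x}V(x)\le h(x)+a\le \|h\|_{\infty}+a$ \emph{without} first knowing $\psi\ast K_0$ is finite. Once $\psi\in L^{\infty}$ is established this way, your dominated-convergence step becomes valid for every $K\in L^{1}$, the identity $\psi\ast K=\mathcal{F}^{-1}(g\widehat K)$ holds, and the density argument and slowly-decreasing endgame in (ii) go through exactly as you wrote them.
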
 

The next Tauberian theorem is a recent extension of the Ingham-Fatou-Riesz theorem, obtained by the authors in \cite{Debruyne-VindasComplexTauberians}. A function $\tau$ is called \emph{slowly decreasing} (with multiplicative arguments) if for each $\varepsilon > 0$ there exists $c>1$ such that 
\[ \liminf_{x\to\infty}\inf_{\eta\in[1,c]}(\tau(\eta x) - \tau(x)) \geq -\varepsilon.
\]

\begin{theorem}
\label{F-RMellinth} Let $\tau\in L^{1}_{loc}[1,\infty)$ be slowly decreasing.
Then,
$$\tau(x) =a\log x+b + o(1)$$
if and only if its Mellin transform $\int^{\infty}_{1} x^{-s} \tau(x)\mathrm{d}x$ converges for $\Re e \: s > 1$
and
\begin{equation*}
\int^{\infty}_{1} x^{-s}\tau(x)\mathrm{d}x - \frac{a}{(s-1)^{2}} -\frac{b}{s-1}  
\end{equation*}
admits local pseudofunction boundary behavior on the line $\Re e \: s = 1.$
\end{theorem}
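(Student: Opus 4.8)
The plan is to prove both implications after the substitution $x=e^{u}$, which turns the Mellin transform into a Laplace transform. Set $T(u):=\tau(e^{u})$, $\rho(u):=T(u)-au-b$, and $z:=s-1$. Since $\int_{1}^{\infty}x^{-s}(a\log x+b)\,\mathrm{d}x=a/(s-1)^{2}+b/(s-1)$ for $\Re e\: s>1$, the hypothesis is equivalent to the assertion that the Laplace transform $\mathcal{L}\{\rho\}(z)=\int_{0}^{\infty}e^{-zu}\rho(u)\,\mathrm{d}u$ converges for $\Re e\: z>0$ and that its holomorphic extension $G(z)$ has local pseudofunction boundary behavior on $\Re e\: z=0$, while the conclusion becomes $\rho(u)=o(1)$; the Tauberian hypothesis also transfers, because $\tau$ being slowly decreasing with multiplicative arguments is the same as $T$ being slowly decreasing with additive arguments, and subtracting the affine function $au+b$ preserves this, so $\rho$ is slowly decreasing. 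The Abelian (``only if'') direction is then short: if $\rho=o(1)$, then $\rho$, extended by $0$ to $\mathbb{R}$, lies in $L^{\infty}(\mathbb{R})$ and vanishes at $\pm\infty$, hence $\mathcal{F}\{\rho\,\mathbf{1}_{[0,\infty)}\}\in PF(\mathbb{R})$; a routine Fubini and dominated convergence argument against test functions identifies it with the distributional boundary value of $\mathcal{L}\{\rho\}$, so $G$ even has global pseudofunction boundary behavior.

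For the Tauberian (``if'') direction, my strategy is to reduce $\rho(u)=o(1)$ to the vanishing of suitable local averages of $\rho$ and then close up with slow decrease. Concretely, once I know that $(\rho*L_{\delta})(y)\to0$ as $y\to\infty$ for a family of good kernels $L_{\delta}$ --- smooth, nonnegative, of integral $1$, concentrated on $[0,\delta]$ (and one-sided variants concentrated on $[-\delta,0]$), and with Fourier transform $\widehat{L_{\delta}}\in\mathcal{D}(\mathbb{R})$, which exist by Paley--Wiener (take $\widehat{L_{\delta}}$ a normalized, translated and dilated positive-definite bump) --- the slow-decrease estimate bounds $\rho(y)$ from below by a right-sided average minus $\varepsilon$ and from above by a left-sided average plus $\varepsilon$, and a two-sided squeeze exploiting the $\delta$-concentration of $L_{\delta}$ forces $\rho\to0$.

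The analytic heart is the identity
\[
\int_{-\infty}^{\infty}G(\sigma+it)\,\widehat{L_{\delta}}(t)\,e^{ity}\,\mathrm{d}t=2\pi\int_{0}^{\infty}e^{-\sigma u}\rho(u)\,L_{\delta}(y-u)\,\mathrm{d}u \qquad (\sigma>0),
\]
obtained by Fubini and Fourier inversion. Fixing $y$ and letting $\sigma\to0^{+}$, the left-hand side converges, by the very definition of local pseudofunction boundary behavior applied to the test function $\widehat{L_{\delta}}(\cdot)e^{iy\cdot}\in\mathcal{D}(\mathbb{R})$, to $\langle g(t),\widehat{L_{\delta}}(t)e^{iyt}\rangle=\widehat{\widehat{L_{\delta}}\,g}(-y)$, while the right-hand side converges, by dominated convergence, to $2\pi(\rho*L_{\delta})(y)$. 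Hence $(\rho*L_{\delta})(y)=\tfrac{1}{2\pi}\widehat{\widehat{L_{\delta}}\,g}(-y)$; and because $\widehat{L_{\delta}}\in\mathcal{D}(\mathbb{R})$ the distribution $\widehat{L_{\delta}}\,g$ is a \emph{global} pseudofunction, so $\widehat{\widehat{L_{\delta}}\,g}(-y)\to0$ as $y\to+\infty$. This delivers $(\rho*L_{\delta})(y)\to0$ for each $\delta$, and the squeeze of the previous paragraph then completes the proof.

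The main obstacle --- the only step that is not routine --- is the a priori bound $\rho=O(1)$, which is used three times above (to justify the Fubini step, the dominated convergence in $\sigma$, and the tail control of $\rho*L_{\delta}$) but is not part of the hypotheses, where only slow decrease is assumed. One must extract it by combining the one-sided regularity coming from slow decrease --- from $\rho(u_{0})=M$, slow decrease gives $\rho(u)\geq M-\varepsilon(u-u_{0})/\delta-\varepsilon$ for $u\geq u_{0}$, so $\rho$ stays $\gtrsim M$ on an interval of length proportional to $M$, forcing $\int_{0}^{u}\rho$ to grow at least like $M^{2}$ --- with a softer upper bound on the growth of $\int_{0}^{u}\rho(v)\,\mathrm{d}v$ read off from the boundary behavior of $G$; alternatively, and more cleanly, one invokes the boundedness lemma for slowly decreasing functions with pseudofunction boundary behavior from \cite{Debruyne-VindasComplexTauberians}. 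Two structural remarks: using local \emph{pseudofunction} (rather than merely $L^{1}_{loc}$) boundary behavior is exactly what makes $\widehat{L_{\delta}}\,g$ a global pseudofunction and hence supplies the decisive Riemann--Lebesgue-type decay in $y$; and the double pole $a/(s-1)^{2}$, which distinguishes this statement from the simple-pole Theorem \ref{W-IMellinth1}, costs nothing extra precisely because it is subtracted off explicitly while the leftover boundary datum is still only required to be a pseudofunction.
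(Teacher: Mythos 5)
The paper does not prove this theorem; it is quoted verbatim from the authors' companion article \cite{Debruyne-VindasComplexTauberians} with only the preceding sentence ``The next Tauberian theorem is a recent extension of the Ingham--Fatou--Riesz theorem, obtained by the authors in [D-V]''. So there is no in-paper proof to compare against, and your job is effectively to supply one. Your sketch follows the standard Korevaar-style route for such results, and its skeleton is sound: pass to additive variables via $x=e^{u}$, pair the Laplace transform against $\widehat{L_{\delta}}(t)e^{ity}$ with $L_{\delta}$ a nonnegative approximate identity whose Fourier transform lies in $\mathcal{D}(\mathbb{R})$, use the local pseudofunction hypothesis to make $\widehat{L_{\delta}}\,g$ a \emph{global} pseudofunction so that $(\rho*L_{\delta})(y)\to 0$ by the Riemann--Lebesgue-type decay, and then squeeze with one-sided kernels and slow decrease. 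That is exactly how a pseudofunction-boundary Ingham--Fatou--Riesz theorem is proved, and the structural remark at the end (the double pole is harmless because it is subtracted, and the pseudofunction class is precisely what supplies the decay in $y$) is on target.

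Two points deserve care. First, in the Abelian direction $\rho=o(1)$ does \emph{not} give $\rho\in L^{\infty}(\mathbb{R})$: since $\tau$ is only in $L^{1}_{loc}$, $\rho$ may be unbounded near $u=0$. One should split $\rho=\rho_{1}+\rho_{2}$ with $\rho_{1}\in L^{1}$ compactly supported and $\rho_{2}\in L^{\infty}$ vanishing at infinity; then $\hat{\rho}_{1}\in C_{0}\subset PF_{loc}$ and $\hat{\rho}_{2}\in PF(\mathbb{R})$, and the sum is a local pseudofunction boundary value as required. Second, you correctly single out the a priori bound $\rho=O(1)$ as the non-routine ingredient (it is also what makes the Fubini step legitimate, since the hypothesis only gives conditional convergence of the Mellin transform, and with boundedness one gets absolute convergence for free), but the heuristic you give for extracting it -- comparing an $M^{2}$ lower bound on $\int\rho$ with ``a softer upper bound read off from the boundary behavior of $G$'' -- is not a proof: obtaining an upper bound on the growth of $\int_{0}^{u}\rho$ from mere local pseudofunction boundary behavior on the whole line is itself a delicate Tauberian statement. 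Invoking the boundedness lemma from \cite{Debruyne-VindasComplexTauberians}, as you suggest as the cleaner alternative, is the right move; without it (or an argument of comparable depth) the Tauberian half is incomplete.
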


It is very important to determine sufficient criteria in order to conclude that an analytic function has local pseudofunction boundary behavior. The ensuing lemma provides such a criterion for the product of two analytic functions.

\begin{lemma}\label{gil1}
Let $G$ and $F$ be analytic on the half-plane $\Re e\: s>1$ and let $U$ be an open subset of $\mathbb{R}$. If $F$ has local pseudofunction boundary behavior on $1+iU$ and $G$ has $A_{loc}$-boundary behavior on $1+iU$, then $G\cdot F$ has local pseudofunction boundary behavior on $1+iU$.
\end{lemma}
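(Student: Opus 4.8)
The plan is to identify the boundary value of $G\cdot F$ on $1+iU$ with the product $g\cdot f$ of the two boundary distributions, and to carry this out in two movements: a soft one — membership of $g\cdot f$ in $PF_{loc}(U)$, via the $A_{loc}(U)$-module structure of $PF_{loc}(U)$ recalled above — and a technical one, the interchange of the limit $\sigma\to 1^{+}$ with the product, which I would handle by inserting a smooth cutoff.

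Concretely, fix $\varphi\in\mathcal D(U)$ and choose $\rho\in\mathcal D(U)$ with $\rho\equiv 1$ on a neighborhood of $\operatorname{supp}\varphi$. Abbreviating $G_\sigma(t)=G(\sigma+it)$ and $F_\sigma(t)=F(\sigma+it)$ — these are smooth, so $\rho G_\sigma,\rho F_\sigma\in\mathcal D(\mathbb R)$ — and using $\rho\equiv 1$ on $\operatorname{supp}\varphi$, one has
\[
\int_{-\infty}^{\infty}G(\sigma+it)F(\sigma+it)\varphi(t)\,\mathrm dt=\bigl\langle \rho F_\sigma,\ (\rho G_\sigma)\varphi\bigr\rangle .
\]
The natural candidate for the boundary value is $g\cdot f$, which already lies in $PF_{loc}(U)$ by the module structure; equivalently, the localized version $(\rho g)(\rho f)$ is a genuine global pseudofunction, since $\widehat{\rho g}\in L^{1}(\mathbb R)$, $\widehat{\rho f}\in C_{0}(\mathbb R)$, and the convolution of an $L^{1}$ function with a $C_{0}$ function lies in $C_{0}$. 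It then remains to show that, as $\sigma\to1^{+}$,
\[
\bigl\langle \rho F_\sigma,\ (\rho G_\sigma)\varphi\bigr\rangle\longrightarrow \bigl\langle \rho f,\ (\rho g)\varphi\bigr\rangle=\langle g f,\varphi\rangle .
\]

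For this limit I would invoke two facts about boundary behavior, both furnished by the analyticity of $G$, $F$ on the half-plane together with the boundary-behavior machinery of \cite{Debruyne-VindasComplexTauberians,korevaar2005}: \emph{(i)} the $A_{loc}$-boundary behavior of $G$ upgrades to norm convergence $\rho G_\sigma\to\rho g$ in the Wiener algebra $A(\mathbb R)$, whence $(\rho G_\sigma)\varphi\to(\rho g)\varphi$ in $A(\mathbb R)$; and \emph{(ii)} the local pseudofunction boundary behavior of $F$ gives that $\{\rho F_\sigma\}_{1<\sigma\le 2}$ is bounded in $PM(\mathbb R)$ and $\rho F_\sigma\to\rho f$ in the weak-$*$ topology $\sigma(PM(\mathbb R),A(\mathbb R))$. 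Granting these, split
\[
\bigl\langle \rho F_\sigma,(\rho G_\sigma)\varphi\bigr\rangle=\bigl\langle \rho F_\sigma,(\rho g)\varphi\bigr\rangle+\bigl\langle \rho F_\sigma,\bigl((\rho G_\sigma)-(\rho g)\bigr)\varphi\bigr\rangle ;
\]
the first term tends to $\langle \rho f,(\rho g)\varphi\rangle=\langle g f,\varphi\rangle$ by (ii), tested against the \emph{fixed} element $(\rho g)\varphi\in A(\mathbb R)$, and the second is $O\bigl(\sup_{\sigma}\|\rho F_\sigma\|_{PM}\cdot\|((\rho G_\sigma)-(\rho g))\varphi\|_{A}\bigr)=o(1)$ by (i). (Alternatively one may apply Parseval to the left-hand side, turning the pairing into $\tfrac1{2\pi}\int\widehat{\rho G_\sigma}(y)\,\widehat{\rho F_\sigma}(-y)\,\mathrm dy$ and passing to the limit by dominated convergence, the two factors converging pointwise because $\rho(\cdot)e^{\mp iy\cdot}\in\mathcal D(U)$.) Since $\varphi\in\mathcal D(U)$ was arbitrary, this gives $(G\cdot F)(1+it)=g(t)f(t)\in PF_{loc}(U)$.

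The main obstacle is item (i), and to a lesser degree (ii): upgrading the \emph{distributional} $A_{loc}$- (resp.\ $PF_{loc}$-) boundary behavior to the \emph{norm} (resp.\ bounded weak-$*$) convergence of the cut-off families $\rho G_\sigma$, $\rho F_\sigma$. This is precisely where the analyticity of $G$ and $F$ on $\Re e\: s>1$ is indispensable — a merely distributional boundary value of a general smooth function would not suffice — and it is exactly the content of the boundary-behavior results invoked from \cite{Debruyne-VindasComplexTauberians,korevaar2005}; once these are in hand, the remainder is the soft multiplier computation above.
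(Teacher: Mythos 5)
Your architecture is genuinely different from the paper's. You propose to identify the boundary distribution of $G\cdot F$ with the product $gf$ of the two boundary values (already in $PF_{loc}(U)$ by the $A_{loc}$-module structure) and then to justify the passage to the limit $\sigma\to 1^{+}$ under the product by a soft duality argument: norm convergence of $\rho G_\sigma$ in $A(\mathbb{R})$ paired against $PM$-bounded weak-$*$ convergence of $\rho F_\sigma$. The paper instead decomposes $G$ and $F$ near a point of $1+iU$ into a piece that continues analytically across the line plus a one-sided Laplace transform — this is the distributional Painlev\'{e} / edge-of-the-wedge theorem step — expands $GF$ into three explicit summands $\tilde G F$, $\tilde F\, G_{+}(s-1)$, and $\mathcal{L}\{g_+*f_+;s-1\}$, and checks each is a local pseudofunction directly.

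There is, however, a gap in your proposal, precisely where you flag it. Claims (i) and (ii) are doing essentially all the work, and they do not follow from the definitions as stated, nor are they available in \cite{Debruyne-VindasComplexTauberians,korevaar2005} in the form you need. The hypotheses only give convergence of $G_\sigma$ and $F_\sigma$ tested against $\varphi\in\mathcal{D}(U)$, together with membership of the boundary distribution in $A_{loc}(U)$ (resp.\ $PF_{loc}(U)$); upgrading this to $A(\mathbb{R})$-norm convergence of $\rho G_\sigma$ (resp.\ bounded weak-$*$ convergence of $\rho F_\sigma$ in $PM(\mathbb{R})$) is a genuine theorem requiring the analyticity in an essential way. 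The natural proof of (i) and (ii) is exactly the paper's decomposition: write the boundary value locally as $\hat g_0$ with $g_0\in L^1$, split $g_0=g_{0+}+g_{0-}$, invoke the edge-of-the-wedge theorem to show that $G(s)-\mathcal{L}\{g_{0+};s-1\}$ continues analytically through $1+iV$, and then deduce $\rho G_\sigma=\rho\tilde G_\sigma+\rho\,\widehat{g_{0+}e^{-(\sigma-1)\cdot}}\to\rho g$ in $A$-norm by Young's inequality and dominated convergence (and analogously for $F$ with $f_0\in L^\infty$, $f_0\to 0$). So while (i) and (ii) are true and your reduction is correct, establishing them would reproduce the substance of the paper's proof; as written, the crucial step is deferred to a citation that does not clearly supply it, and your secondary Parseval/dominated-convergence route has the same issue (a uniform $L^1$ majorant for $\widehat{\rho G_\sigma}$ needs the same decomposition).
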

\begin{proof} Fix a relatively compact open subset $V$ such that $\overline{V}\subset U$. By definition, we can find $g\in L^{1}(\mathbb{R})$ and $f\in L^{\infty}(\mathbb{R})$ such that $\hat{g}(t)= G(1+it)$ and $\hat{f}(t)=F(1+it)$ on $V$ and $\lim_{|x|\to\infty}f(x)=0$.
Let $g_{\pm}(x)=g(x)H(\pm x)$ and $f_{\pm}(x)=f(x)H(\pm x)$, where $H$ is the Heaviside function, i.e., the characteristic function of the interval $[0,\infty)$. We define $G_{\pm}(s)=\mathcal{L}\{g_{\pm};s\}$ and $F_{\pm}(s)=\mathcal{L}\{f_{\pm};s\}$, where $\mathcal{L}$ stands for the Laplace transform so that $G_{+}(s)$ and $F_{+}(s)$ are analytic on $\Re e\:s>0$, whereas $G_{-}(s)$ and $F_{-}(s)$ are defined and analytic on $\Re e\:s<0$. Observe that \cite{bremermann} $\hat{g}_{\pm}(t)=\lim_{\sigma\to0^{+}} G_{\pm}(\pm\sigma+it)$ and $\hat{f}_{\pm}(t)=\lim_{\sigma\to0^{+}}F_{\pm}(\pm\sigma+it)$, where the limit is taken in $\mathcal{S}'(\mathbb{R})$ (in the first case, the limit actually holds uniformly for $t\in \mathbb{R}$ because $g_{\pm}\in L^{1}(\mathbb{R})$). Obviously, we also have $\hat{g}=\hat{g}_{-}+\hat{g}_{+}$ and $\hat{f}=\hat{f}_{-}+\hat{f}_{+}$ on $\mathbb{R}$. Consider the analytic function, defined off the line $1+i\mathbb{R}$,
\[
\tilde{G}(s)=\begin{cases}
G(s)- G_{+}(s-1) & \quad \mbox{if }\Re e\: s>1,
\\
G_{-}(s-1) & \quad \mbox{if }\Re e\: s<1.
\end{cases}
\]
The function $\tilde{G}(s)$ has zero jump across the boundary set $iV+1$, namely,
\[
\lim_{\sigma\to 0^{+}} \tilde{G}(1+\sigma +it)- \tilde{G}(1-\sigma+it)= 0,
\]
where the limit is taken in the distributional sense\footnote{The limit actually holds uniformly for $t$ in compact subsets of $V$, as follows from the next sentence.}. The distributional version of the Painlev\'{e} theorem on
analytic continuation (also known as the edge-of-the-wedge theorem \cite[Thm. B]{rudin1971}) implies that $\tilde{G}$ has analytic continuation through $1+iV$. Exactly the same argument gives that $\tilde{F}(s)=F_{-}(s-1)$ has analytic continuation through $1+iV$ as well and $F(s)=\tilde{F}(s)+F_{+}(s-1)$.
Now, 
$$
G(s)F(s)= \tilde{G}(s)F(s)+ \tilde{F}(s) G_{+}(s-1)+\mathcal{L}\{g_{+}\ast f_{+}; s-1\},
$$
in the intersection of a complex neighborhood of $1+iV$ and the half-plane $\Re e\:s>1$. Taking boundary values on $1+iV$, we obtain that $(G\cdot F)(1+it)=\tilde{G}(1+it) F(1+it)+\tilde{F}(1+it)\hat{g}_{+}(t)+\widehat{f_{+}\ast g_{+}}(t)\in PF_{loc}(V)$, because real analytic functions are multipliers for local pseudofunctions and $\lim_{|x|\to\infty}(f_{+}\ast g_{+})(x)=0$.
\end{proof}

\section{Proof of Theorem \ref{gith1}}\label{proof Diamond's theorem}
Our starting point is the zeta function link between the non-decreasing functions $N$ and $\Pi$, 
\begin{equation}
\label{gieq3.1}
\zeta(s)=\int_{1^{-}}^{\infty}x^{-s}\mathrm{d}N(x)=\exp\left(\int_{1}^{\infty}x^{-s}\mathrm{d}\Pi(x)\right).
\end{equation}
The hypothesis (\ref{gieq1.1}) is clearly equivalent to 
\begin{equation}
\label{gieq3.2}
\int_{2}^{\infty}\left|\Pi(x)-\Pi_{0}(x)\right|\:\frac{\mathrm{d}x}{x^{2}}<\infty,
\end{equation}
where 
\begin{equation}
\label{gieqcanonicalpi}
\Pi_{0}(x)=\int_{1}^{x}\frac{1-1/u}{\log u} \:\mathrm{d}u \quad \mbox{ for } x\geq1.
\end{equation}
Note also that 
\[
\int_{1}^{\infty}x^{-s}\mathrm{d}\Pi_{0}(x)=\log\left( \frac{s}{s-1}\right) \quad \mbox{for }\Re e\:s>1.
\]
This guarantees the convergence of \eqref{gieq3.1} for $\Re e\: s>1$.
Calling
\begin{equation}\label{gieq3.3}
J(s)=\int_{1}^{\infty}x^{-1-s}(\Pi(x)-\Pi_{0}(x))\mathrm{d}x\:, \quad \Re e\:s>1,
\end{equation}
$\log a=J(1)$, and subtracting $a/(s-1)$ from \eqref{gieq3.1}, we obtain the expression
\begin{equation}
\label{gieq3.4}\zeta(s)-\frac{a}{s-1}= \frac{s e^{sJ(1)}-e^{J(1)}}{s-1}+ s^{2}e^{sJ(1)} \cdot \frac{e^{s(J(s)-J(1))}-1}{s(J(s)-J(1))}\cdot \frac{J(s)-J(1)}{s-1}\: .
\end{equation}
The first summand in the right side of \eqref{gieq3.4} and the term $s^{2}e^{sJ(1)}$ are entire functions. Thus, Theorem \ref{W-IMellinth1} yields (\ref{gieq1.2}) if we verify that 
\begin{equation}
\label{gieq3.5} \left(\frac{e^{s(J(s)-J(1))}-1}{s(J(s)-J(1))}\right)\cdot \frac{J(s)-J(1)}{s-1}\: 
\end{equation}
has local pseudofunction boundary behavior on $\Re e\:s=1$. The hypothesis (\ref{gieq3.2}) gives that $J$ extends continuously to $\Re e\: s=1$, but also the more important membership relation $J(1+it)\in A(\mathbb{R})$. Thus, $(1+it)(J(1+it)-J(1))\in A_{loc}(\mathbb{R})$. Since the local Wiener algebra is closed under (left) composition with entire functions, we conclude that the first factor in (\ref{gieq3.5}) has $A_{loc}$-boundary behavior on $\Re e\:s=1$. On the other hand, the second factor of \eqref{gieq3.5} has as boundary distribution on $\Re e\:s=1$ the Fourier transform of $\int_{1}^{e^{y}}u^{-2}(\Pi(u)-\Pi_{0}(u))\mathrm{d}u- J(1)=o(1)$, a global pseudofunction. So, the local pseudofunction boundary behavior of \eqref{gieq3.4} is a consequence of Lemma \ref{gil1}. This establishes Theorem \ref{gith1}.

\section{A generalization of Theorem \ref{gith1}}\label{section generalization Diamond's theorem}
A small adaptation of our method from the previous section applies to show the following generalization of Diamond's theorem: 

If the zeta function \eqref{gieq3.1} converges for $\Re e\:s>1$, then $A_{loc}$-boundary behavior of
\begin{equation}
\label{gieq4.1}
\log \zeta(s)- \log\left(\frac{1}{s-1}\right)
\end{equation}
on $\Re e\:s=1$ still suffices for $N$ to have a positive asymptotic density. Indeed, a key point in Section \ref{proof Diamond's theorem} to establish (\ref{gieq1.2}) via Theorem \ref{W-IMellinth1} was  the $A_{loc}$-boundary behavior on $\Re e \: s=1$ of the function $J$ defined in \eqref{gieq3.3}, but the latter is in fact equivalent to the assumption of $A_{loc}$-boundary behavior on \eqref{gieq4.1}. The function \eqref{gieq3.4} then has local pseudofunction boundary behavior on $\Re e\:s=1$ because
$ (J(s)-J(1))/(s-1)$ does, as follows from Lemma \ref{gil2} below. 

We can actually deduce a more general result. Note that the very last argument in the proof of Theorem \ref{gith2} we give below could also have been used to show Theorem \ref{gith1} in a more direct way through Lemma \ref{gil2}.

\begin{theorem}\label{gith2}
Suppose the zeta function \eqref{gieq3.1} converges for $\Re e\:s>1$ and there are a discrete set of points $0<\eta<t_1<t_2<\dots$ and constants $-1< \beta_1,\beta_2,\dots$ such that
\begin{itemize}
\item[(a)] $\log \zeta(s)- \log(1/(s-1))$ has $A_{loc}$-boundary behavior on $1+i(-\eta,\eta)$,
\item [(b)] for each $T>0$ there is a constant $K_{T}>0$ such that
\begin{equation*}
\log|\zeta (\sigma+it)|\leq K_{T} + \sum_{0<t_n<T}\beta_n\log|\sigma-1+i(t-t_n)|
\end{equation*}
for every $\eta/2<t<T$ and $1<\sigma<2$. 
\end{itemize}
Then, $N$ has a positive asymptotic density. 
\end{theorem}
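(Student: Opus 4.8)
The plan is to verify the hypotheses of the distributional Wiener--Ikehara theorem (Theorem \ref{W-IMellinth1}) for the non-decreasing function $S=N$: it suffices to produce a constant $a>0$ for which $\zeta(s)-a/(s-1)$ has local pseudofunction boundary behavior on the whole line $\Re e\: s=1$. Write $M(s)=\log\zeta(s)+\log(s-1)=\log\zeta(s)-\log(1/(s-1))$, so that $\zeta(s)=(s-1)^{-1}e^{M(s)}$. For real $\sigma>1$ one has $(\sigma-1)\zeta(\sigma)>0$, hence $M(\sigma)\in\mathbb R$; since hypothesis (a) makes $M$ extend $A_{loc}$-continuously to $1+i(-\eta,\eta)$, this pins down $a:=e^{M(1)}>0$, where $M(1)$ is the (real) value at $t=0$ of the boundary function of $M$. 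I expect the only genuinely fussy point here to be matching the along-the-axis limit of $(\sigma-1)\zeta(\sigma)$ with this boundary value, which should follow from the continuity already supplied by $A_{loc}$-boundary behavior.

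Near the origin I would use (a) together with Lemma \ref{gil2}. Since the local Wiener algebra is closed under composition with entire functions, $e^{M(s)}$ has $A_{loc}$-boundary behavior on $1+i(-\eta,\eta)$, with boundary value $a$ at $t=0$; applying Lemma \ref{gil2} to $G(s)=e^{M(s)}$ therefore shows that
\[
\zeta(s)-\frac{a}{s-1}=\frac{e^{M(s)}-a}{s-1}
\]
has local pseudofunction boundary behavior on $1+i(-\eta,\eta)$.

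Away from the origin the burden falls on (b). Fix $T>\eta/2$ and set $\Omega_T=\{\,1<\Re e\: s<2,\ \eta/2<\Im m\: s<T\,\}$. Each $s-1-it_n$ has positive real part on $\Omega_T$, so the principal-branch product $R(s):=\prod_{0<t_n<T}(s-1-it_n)^{-\beta_n}$ is analytic on $\Omega_T$, and (b) reads exactly $|\zeta(s)R(s)|\le e^{K_T}$ there. Thus $\zeta R$ is bounded and analytic on $\Omega_T$, hence (Fatou's theorem on boundary values of bounded analytic functions) has an $L^\infty$ boundary function $h$ on the edge $1+i(\eta/2,T)$, with $\zeta(\sigma+it)R(\sigma+it)\to h(t)$ boundedly almost everywhere, so distributionally, as $\sigma\to1^+$. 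On the other hand $R(s)^{-1}=\prod_{0<t_n<T}(s-1-it_n)^{\beta_n}$ is, on $\Omega_T$, bounded by a $\sigma$-independent function of $t$ lying in $L^1_{loc}(\eta/2,T)$ — this is precisely where $\beta_n>-1$ enters, each $|t-t_n|^{\beta_n}$ being locally integrable and the singular points being distinct — and converges a.e.\ to $\prod_{0<t_n<T}(i(t-t_n))^{\beta_n}$. Dominated convergence then gives that $\zeta$ has the $L^1_{loc}$ boundary value $h(t)\prod_{0<t_n<T}(i(t-t_n))^{\beta_n}$ on $1+i(\eta/2,T)$, in particular local pseudofunction boundary behavior there (recall $L^1_{loc}\subset PF_{loc}$). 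Letting $T\to\infty$, and invoking $\zeta(\bar s)=\overline{\zeta(s)}$ for the reflected interval, $\zeta$ — hence also $\zeta(s)-a/(s-1)$, since $a/(s-1)$ is analytic off $s=1$ — has local pseudofunction boundary behavior on $1+i(\mathbb R\setminus\{0\})$.

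Since $(-\eta,\eta)\cup(\mathbb R\setminus\{0\})=\mathbb R$ and local pseudofunction boundary behavior is a local property, the two pieces combine to give local pseudofunction boundary behavior of $\zeta(s)-a/(s-1)$ on $\Re e\: s=1$, and Theorem \ref{W-IMellinth1} delivers $N(x)\sim ax$ with $a>0$. The main obstacle is the step resting on (b): one must turn a one-sided pointwise bound on $\log|\zeta|$ into an actual boundary value that is a (local) pseudofunction. The device above — flatten the finitely many admissible singularities by multiplying by the fractional-power factor $R(s)$, apply Fatou's theorem to the resulting bounded analytic function, then divide $R$ back out while keeping everything in $L^1_{loc}$ thanks to $\beta_n>-1$ — is the crux, and the place where all the structure of hypothesis (b) is consumed.
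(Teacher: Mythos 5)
Your proposal is correct and runs along essentially the same lines as the paper's proof: near $t=0$ you invoke hypothesis (a) and Lemma \ref{gil2} applied to $G(s)=e^{M(s)}$ to get local pseudofunction boundary behavior of $(G(s)-G(1))/(s-1)$, and away from $t=0$ you use hypothesis (b) to get a bounded analytic function after dividing by a suitable fractional-power product, obtain $L^\infty$ boundary values via Fatou, and recover an $L^1_{loc}$ boundary value for $\zeta$ itself because $\beta_n>-1$ keeps the inverse factors locally integrable; then Theorem \ref{W-IMellinth1} finishes. The only cosmetic difference from the paper is that you treat the factors $(s-1-it_n)^{-\beta_n}$ one-sidedly and pass to $t<0$ by the conjugation symmetry $\zeta(\bar s)=\overline{\zeta(s)}$, whereas the paper combines each conjugate pair into $((s-1)^2+t_n^2)^{\beta_n}$ from the outset, and you spell out the Fatou step and the determination $a=e^{M(1)}$ somewhat more explicitly than the paper does.
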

\begin{proof} Set $G(s)=\exp(\log \zeta(s) - \log (1/(s-1)))$,  $a=G(1)$, and 
$$F_{T}(s)=\log \zeta(s) -\sum_{t_n<T}\beta_n(\log(s-1-it_n)+\log (s-1+it_n)).$$
Condition (b) says that $\Re e \: F_{T}(s)$ is bounded from above on the rectangles $(1,2)\times (\eta/2,T)$ and $(1,2)\times (-T,-\eta/2)$. Thus, since $T$ is arbitrary,
$$
\zeta(s)-\frac{a}{s-1}=  \exp(F_{T}(s)) \prod_{0<t_n<T}((s-1)^{2}+t^2_n)^{\beta_n} - \frac{a}{s-1}
$$
has $L^1_{loc}$-boundary extension to $1+i(\mathbb{R}\setminus [-\eta/2,-\eta/2])$. By condition (a), $G(1+it)\in A_{loc}(-\eta,\eta)$ and
$
\zeta(s)-a/(s-1)$
is equal to
\begin{equation}
\label{gieq4.2}
\frac{G(s)-G(1)}{s-1}\:.
\end{equation}
So, (\ref{gieq1.2}) follows at once by combining Theorem \ref{W-IMellinth1} with the next lemma.
\end{proof}

\begin{lemma}
\label{gil2} Let $G(s)$ be analytic for $\Re e\:s>1$ and let $U\subset \mathbb{R}$ be open. If $G$ has boundary extension to $1+iU$ as an element of the local Wiener algebra $G(1+it)\in A_{loc}(U)$ and $s_0\in 1+iU$, then
$$
\frac{G(s)-G(s_0)}{s-s_0}
$$
 has local pseudofunction boundary behavior on $1+iU$.
\end{lemma}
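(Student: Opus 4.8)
The plan is to reduce the statement to the case $s_0 = 1$ by an elementary rescaling and then to exploit the fact, already implicit in the proof of Lemma \ref{gil1}, that a function analytic off the line $1+i\mathbb{R}$ with vanishing distributional jump across $1+iV$ extends analytically there. Write $s_0 = 1+it_0$ with $t_0\in U$. First I would note that $A_{loc}$-boundary behavior is preserved under translation in the imaginary direction, so after replacing $G(s)$ by $G(s+it_0)$ and $U$ by $U - t_0$ it suffices to treat $s_0 = 1$. Thus we may assume $G(1+it)\in A_{loc}(U)$ and we must show that $(G(s)-G(1))/(s-1)$ has local pseudofunction boundary behavior on $1+iU$.

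Fix a relatively compact open $V$ with $\overline{V}\subset U$. By definition of $A_{loc}$, pick $g\in L^1(\mathbb{R})$ with $\hat{g}(t) = G(1+it)$ on $V$. As in the proof of Lemma \ref{gil1}, split $g = g_- + g_+$ with $g_{\pm}(x) = g(x)H(\pm x)$, set $G_{\pm}(s) = \mathcal{L}\{g_{\pm};s\}$ (analytic for $\pm\Re e\,s>0$), and form
\[
\tilde{G}(s) = \begin{cases} G(s) - G_+(s-1) & \Re e\,s>1,\\ G_-(s-1) & \Re e\,s<1,\end{cases}
\]
which by the edge-of-the-wedge theorem continues analytically through $1+iV$, with $\tilde G(1+it)$ a real-analytic (indeed $A_{loc}$, since $\hat g_- \in A(\mathbb R)$) function on $V$. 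Now write
\[
\frac{G(s)-G(1)}{s-1} = \frac{\tilde{G}(s) - \tilde{G}(1)}{s-1} + \frac{G_+(s-1) - G_+(0)}{s-1},
\]
noting $G(1) = \tilde G(1) + G_+(0)$ and that $G_+(0) = \int_0^\infty g_+(x)\,\mathrm{d}x = \int_0^\infty g(x)\,\mathrm{d}x$ is a well-defined constant. The first summand is the difference quotient of a function analytic in a complex neighborhood of $1+iV$, hence is itself analytic there and in particular has $L^1_{loc}$- (even analytic) boundary behavior. For the second summand, observe $(G_+(w) - G_+(0))/w = \mathcal{L}\{\Psi;w\}$ where $\Psi(x) = -\int_x^\infty g_+(u)\,\mathrm{d}u$ for $x\geq 0$: indeed, integrating by parts, $\mathcal{L}\{\Psi;w\} = w^{-1}(\Psi(0^+)\cdot(-1)\cdot\text{?})$ — more cleanly, one checks directly that $\int_0^\infty e^{-wx}\left(-\int_x^\infty g(u)\,\mathrm{d}u\right)\mathrm{d}x = w^{-1}\int_0^\infty e^{-wx}g(x)\,\mathrm{d}x - w^{-1}\int_0^\infty g(u)\,\mathrm{d}u$ by Fubini, which is exactly $(G_+(w)-G_+(0))/w$. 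Since $\Psi\in L^1(0,\infty)$ (as $g\in L^1$, $\Psi$ is bounded, continuous, and $\to 0$; and $\Psi\in L^1$ needs a small argument — alternatively note $\Psi$ is continuous with $\Psi(0)$ finite and $\Psi(x)\to 0$, so $\Psi\in L^\infty$ and its Laplace transform on $\Re e\,w = 0^+$ is a bounded continuous function vanishing at infinity, i.e.\ a global pseudofunction), its Laplace transform has global pseudofunction boundary value on $\Re e\,w = 0$. Setting $w = s-1$, the second summand has global — hence local — pseudofunction boundary behavior on $1+i\mathbb{R}$, and restricting to $V$ we are done; since $V$ was arbitrary with $\overline V\subset U$, the conclusion holds on all of $1+iU$.

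The main obstacle is the integrability/decay bookkeeping for $\Psi$, i.e.\ verifying that $(G_+(w)-G_+(0))/w$ genuinely has \emph{pseudofunction} (not merely pseudomeasure) boundary behavior. The cleanest route avoids claiming $\Psi\in L^1$: one only needs that $\Psi$ is continuous, bounded, and $\Psi(x)\to 0$ as $x\to\infty$, all immediate from $g\in L^1(0,\infty)$; then $(G_+(w)-G_+(0))/w = \mathcal{L}\{\Psi;w\}$ extends continuously to $\Re e\,w = 0$ with boundary value $\hat\Psi\in PF(\mathbb{R})$ by the Riemann–Lebesgue-type argument for Laplace transforms of $L^\infty$-functions vanishing at infinity (cf.\ the treatment of $f_+\ast g_+$ in Lemma \ref{gil1}). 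Everything else is the same edge-of-the-wedge splitting already used above, so no new ideas beyond Lemma \ref{gil1} are required.
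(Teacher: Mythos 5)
Your proof is correct and follows essentially the same route as the paper: reduce to $s_0=1$, apply the edge-of-the-wedge splitting $G(1+it)=\tilde G(1+it)+\hat g_+(t)$ exactly as in Lemma \ref{gil1}, and identify the second difference quotient as the Laplace transform of $\Psi(x)=-\int_x^\infty g_+(u)\,\mathrm{d}u$, which is bounded, continuous, and vanishes at infinity, hence yields a pseudofunction boundary value. You are a bit more explicit than the paper about why $\hat\Psi\in PF(\mathbb{R})$ (correctly noting that one needs only $\Psi\in L^\infty$ with $\Psi(x)\to 0$, not $\Psi\in L^1$), but the underlying argument is identical.
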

\begin{proof} We may assume that $0\in U$ and $s_0=1$.
Since (\ref{gieq4.2}) has a continuous boundary function on $1+iU$ except perhaps at $s=1$, it is enough to verify its local pseudofunction boundary behavior at $s=1$. As in the proof of Lemma \ref{gil1} with the aid of the Painlev\'{e} theorem on analytic continuation, we can find an analytic function $\tilde{G}(s)$ in a neighborhood of $s=1$ and a function $g_{+} \in L^{1}(\mathbb{R})$ such that $\operatorname*{supp} g_{+}\subseteq [0,\infty)$ and $G(1+it)=\tilde{G}(1+it)+\hat{g}_{+}(t)$ for, say, $t\in (-\lambda,\lambda)$. The boundary value of (\ref{gieq4.2}) on $1+(-i\lambda,i\lambda)$ is the sum of the analytic function 
$$
\frac{\tilde{G}(1+it)-\tilde{G}(1)}{it}
$$
and the Fourier transform $\hat{f}(t)$, where $f$ is the function $f(x)=-\int_{x}^{\infty} g_{+}(u)\mathrm{d}u$ for $x>0$ and $f(x)=0$ for $x<0$, whence the assertion follows.
\end{proof}

We also obtain,

\begin{corollary}
\label{gic3} Suppose there are $0<t_1<t_2<\dots<t_k$,  $y_1,y_2,\dots,y_k$, and $b_1,b_2,\dots,b_k$ such that
\begin{equation}
\label{gieq4.3}
\int_{2}^{\infty} \left|\Pi(x)-\frac{x}{\log x}\left(1+\sum_{j=1}^{k} b_j \cos(t_j \log x+y_j)\right)\right|\frac{\mathrm{d}x}{x^2}<\infty.
\end{equation}
If 
\begin{equation}
\label{gieq4.4}
b_{j} (1+t^{2}_{j})^{1/2}\cos(y_j+\arctan t_j)<2, \quad j=1,\dots, k,
\end{equation}
then $N$ has positive asymptotic density.
\end{corollary}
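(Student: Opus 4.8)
The plan is to deduce the corollary from Theorem~\ref{gith2} by making the singular structure of $\log\zeta(s)$ on the line $\Re e\: s=1$ completely explicit. First I would note that, since $\Pi_{0}(x)=x/\log x+O(x/\log^{2}x)$, hypothesis \eqref{gieq4.3} is equivalent to
\[
\int_{2}^{\infty}|R_{0}(x)|\,\frac{\mathrm{d}x}{x^{2}}<\infty,\qquad R_{0}(x):=\Pi(x)-\Pi_{0}(x)-\sum_{j=1}^{k}b_{j}\,\frac{x\cos(t_{j}\log x+y_{j})}{\log x},
\]
and that this bound, together with the monotonicity of $\Pi$, already forces $\Pi(x)=O(x)$; hence the zeta function \eqref{gieq3.1} converges for $\Re e\: s>1$, as Theorem~\ref{gith2} requires.

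Next I would split $\log\zeta(s)=\int_{1^{-}}^{\infty}x^{-s}\mathrm{d}\Pi(x)$ into $\int_{1^{-}}^{2}+\int_{2}^{\infty}$, integrate the second piece by parts, and substitute there $\Pi(x)=\Pi_{0}(x)+\sum_{j}b_{j}x\cos(t_{j}\log x+y_{j})/\log x+R_{0}(x)$. Using $\int_{2}^{\infty}x^{-w}/\log x\,\mathrm{d}x=E_{1}((w-1)\log2)$ (where $E_{1}(z)=\int_{1}^{\infty}e^{-zu}u^{-1}\mathrm{d}u$, so that $E_{1}(z)+\log z$ is entire), the identity $\cos(t_{j}\log x+y_{j})=\frac{1}{2}(e^{iy_{j}}x^{it_{j}}+e^{-iy_{j}}x^{-it_{j}})$, and $\int_{1^{-}}^{\infty}x^{-s}\mathrm{d}\Pi_{0}(x)=\log(s/(s-1))$, this produces a representation
\[
\log\zeta(s)=\log\frac{s}{s-1}-s\sum_{j=1}^{k}\frac{b_{j}}{2}\Bigl(e^{iy_{j}}\log(s-1-it_{j})+e^{-iy_{j}}\log(s-1+it_{j})\Bigr)+\Psi(s),
\]
in which $\Psi$ is the sum of an entire function and $s\int_{2}^{\infty}x^{-s-1}R_{0}(x)\,\mathrm{d}x$. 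Since $R_{0}(x)/x\in L^{1}([2,\infty),\mathrm{d}x/x)$, the boundary value $\Psi(1+it)$ is a real-analytic function plus $(1+it)$ times an element of $A(\mathbb{R})$, so $\Psi$ has $A_{loc}$-boundary behavior on $\Re e\: s=1$.

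Then I would invoke Theorem~\ref{gith2} with the finite set $\{t_{1},\dots,t_{k}\}$, any $\eta\in(0,t_{1})$, and
\[
\beta_{j}:=-\frac{b_{j}}{2}\,\Re e\bigl[(1+it_{j})e^{iy_{j}}\bigr]=-\frac{b_{j}}{2}(1+t_{j}^{2})^{1/2}\cos(y_{j}+\arctan t_{j}),
\]
so that \eqref{gieq4.4} is precisely the admissibility requirement $\beta_{j}>-1$. Hypothesis (a) holds because $\log\zeta(s)-\log(1/(s-1))=\log s-s\sum_{j}(\cdots)+\Psi(s)$ and each $\log(s-1\mp it_{j})$ is analytic across $1+i(-\eta,\eta)$ (as $\eta<t_{1}\le t_{j}$), while $\Psi$ already has $A_{loc}$-boundary behavior. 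For hypothesis (b), with $s=\sigma+it$, $1<\sigma<2$, $\eta/2<t<T$: the terms $\log|s/(s-1)|$, $\Re e\,\Psi(s)$ and all summands carrying $\log(s-1+it_{j})$ are bounded above on such rectangles, while the summand with $\log(s-1-it_{j})$ differs from $\beta_{j}\log|s-1-it_{j}|$ by $-\frac{b_{j}}{2}\bigl((\sigma-1)\cos y_{j}-(t-t_{j})\sin y_{j}\bigr)\log|s-1-it_{j}|$ plus an $O(1)$ term, and this correction is bounded since $|(\sigma-1)\cos y_{j}-(t-t_{j})\sin y_{j}|\le|s-1-it_{j}|$ and $w\log w$ stays bounded as $w\to0$. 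Summing over $j$ gives (b) (it suffices to check (b) for $T$ outside $\{t_{1},\dots,t_{k}\}$, which is what one uses when letting $T\to\infty$ in the proof of Theorem~\ref{gith2}). Theorem~\ref{gith2} then yields $N(x)\sim ax$ with $a>0$.

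The step I expect to be the main obstacle is producing the displayed representation with the \emph{correct} singular coefficients $-\frac{b_{j}}{2}e^{\pm iy_{j}}$ at $1\pm it_{j}$, which demands careful bookkeeping of the branches of $E_{1}$ and of the entire and bounded remainders. A close second is the verification of (b): its crux is the identity $(\sigma\cos y_{j}-t\sin y_{j})-(\cos y_{j}-t_{j}\sin y_{j})=(\sigma-1)\cos y_{j}-(t-t_{j})\sin y_{j}=O(|s-1-it_{j}|)$, which is exactly what makes $\beta_{j}=-\frac{b_{j}}{2}\Re e[(1+it_{j})e^{iy_{j}}]$ both admissible (by \eqref{gieq4.4}) and just large enough to dominate the growth of $\log|\zeta(s)|$ near $1+it_{j}$.
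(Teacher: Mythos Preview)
Your proposal is correct and follows essentially the same route as the paper: extract the singular part of $\log\zeta(s)$ on $\Re e\,s=1$ from the $L^{1}$ hypothesis \eqref{gieq4.3}, identify $\beta_{j}=-\tfrac{b_{j}}{2}(1+t_{j}^{2})^{1/2}\cos(y_{j}+\arctan t_{j})$ so that \eqref{gieq4.4} becomes $\beta_{j}>-1$, and invoke Theorem~\ref{gith2}. The paper compresses all of this into a single displayed formula (with $\theta_{j}=y_{j}+\arctan t_{j}$ and constant coefficients $\tfrac{b_{j}}{2}(1+t_{j}^{2})^{1/2}e^{\pm i\theta_{j}}$) and the sentence ``an application of Theorem~\ref{gith2} then yields the result''; you instead keep the factor $s$ that comes naturally from integration by parts and absorb the discrepancy $s-(1+it_{j})=O(|s-1-it_{j}|)$ directly into the bounded remainder when checking condition~(b), which is a perfectly legitimate variant.
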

\begin{proof}
Indeed, setting $\theta_j=y_j+\arctan t_{j}$, we obtain from (\ref{gieq4.3}) that
$$
\log \zeta(s)+\log (s-1) +\sum_{j=1}^{k} b_j \frac{(1+t_j^2)^{1/2}}{2} (e^{i\theta_j}\log (s-1-it_{j})+e^{-i\theta_j} \log (s-1+it_{j}))
$$ 
has $A_{loc}$-boundary behavior on $\Re e\:s=1$. An application of Theorem \ref{gith2} then yields the result.
\end{proof}

We now discuss three examples. The first two examples show that there are instances of generalized number systems for which the $L^1$ condition \eqref{gieq1.1} may fail, but the other criteria given in this section apply to show $N(x)\sim ax$. The third example shows that the assumption $-1< \beta_1,\beta_2,\dots$ in Theorem \ref{gith2} cannot be relaxed.

\begin{example}
\label{giex1} Let $k\geq2$ be an integer and let $\varphi$ be a (non-trivial) $C^{k}$ function with $\operatorname*{supp}\varphi\subset (0,1)$ and such that $\|\varphi^{(k)}\|_{L^{\infty}(\mathbb{R})}\leq 1/8$. We consider the generalized number system with absolutely continuous prime distribution function
$$
\Pi(x)= \Pi_{0}(x) + x\sum_{n=3}^{\infty}\frac{1}{n\log^{1/k} n}\varphi^{(k-1)}((\log n)^{1/k}(\log x-n)), 
$$
where $\Pi_{0}$ is the function \eqref{gieqcanonicalpi}.
Since $\|\varphi^{(k-1)}\|_{L^{\infty}(\mathbb{R})}\leq 1/8 $ as well, 
$$
\left|\left(x\sum_{n=3}^{\infty}\frac{1}{n(\log n)^{1/k}}\varphi^{(k-1)}((\log n)^{1/k}(\log x-n))\right)'\right| \quad
\begin{cases}
=0 & \mbox{if } x\leq e^3,
\\
\leq\displaystyle \frac{1}{2\log x} & \mbox{if }x> e^{3} ,
\end{cases}
$$
and thus $\Pi'(x)\geq 0$ for all $x\geq1$. Also,
\[
\int_{2}^{\infty}\left|\Pi(x)-\Pi_{0}(x)\right|\:\frac{\mathrm{d}x}{x^{2}}= 
\|\varphi^{(k-1)}\|_{L^{1}(\mathbb{R})} \sum_{n=3}^{\infty}\frac{1}{n(\log n)^{2/k}}=\infty,
\]
so that \eqref{gieq1.1} does not hold for this example. On the other hand, setting
\[
f(u)=\sum_{n=3}^{\infty}\frac{1}{n\log n} \varphi((\log n)^{1/k}(u-n)),
\]
we have that
\[\log \zeta(s)-\log \left(\frac{1}{s-1}\right)=\log s + s(s-1)^{k-1}\int_{0}^{\infty}e^{-(s-1)u}f(u)\:\mathrm{d}u
\]
has $A_{loc}$-boundary behavior on $\Re e\:s=1$ because
$$
\int_{0}^{\infty}|f(u)|\: \mathrm{d}u= \|\varphi\|_{L^1(\mathbb{R})} \sum_{n=3}^{\infty}\frac{1}{n(\log n)^{1+1/k} }<\infty.
$$
So, Theorem \ref{gith2} applies to show $N(x)\sim ax$ for some $a>0$.
\end{example}

\begin{example}
\label{giex2}
Let 
\[
\mathrm{d}\Pi(u)=\frac{1+\cos(\log u)}{\log u} \chi_{[2,\infty)}(u)\mathrm{d}u.
\]
This continuous generalized number system is a modification of the one used by Beurling to show the sharpness of his PNT \cite{beurling}. Note the PNT fails for $\Pi$, one has instead
\begin{equation}
\label{gieq4.5}
\Pi(x)=\frac{x}{\log x}\left(1+\frac{\sqrt{2}}{2}\cos \left(\log x-\frac{\pi}{4}\right)\right)+O\left(\frac{x}{\log^{2} x}\right).
\end{equation}
It is then clear that
$$
\int_{2}^{\infty}\left|\Pi(x)-\frac{x}{\log x}\right|\: \frac{\mathrm{d}{x}}{x^2}=\infty,
$$
but $\Pi$ satisfies the hypotheses of Corollary \ref{gic3}, so that $N(x)\sim ax$ holds.
\end{example}

\begin{example}
\label{giex3}
We consider
$$
\Pi (x)=\sum_{2^{k+1/2}\leq x} \frac{2^{k+1/2}}{k}\,.
$$
Its zeta function is $4\pi i/\log 2$ periodic and in fact given by 
$$
\log \zeta(s) = - 2^{-(s-1)/2}\log(1-2^{-(s-1)}).
$$
From this explicit formula one verifies that conditions (a) and (b) from Theorem \ref{gith2} are satisfied with 
$\eta= 2\pi/\log 2$, $t_{n}=4\pi n/\log 2$, and $\beta_n=-1$, $n\in \mathbb{N}_{+}$. On the other hand, it has been proved in \cite[Ex. 4.2]{Debruyne-Diamond-Vindas} that $N(x)$ has no asymptotic density due to wobble,
\[
\liminf_{x \to \infty} \frac{N(x)}{x} < 1.37< 1.52 < \limsup_{x \to \infty} \frac{N(x)}{x},
\]
and moreover $m(x)=\Omega(1)$. 
\end{example}

\section{The estimate $m(x)=o(1)$}\label{section gi Mobius}

In this section we study sufficient conditions that imply the estimate (\ref{gieq1.3}). As is actually the case in the previous sections, it is not essential to assume that the generalized number system is discrete; indeed, what is important is that $N$ and $\Pi$ are non-decreasing and satisfy (\ref{gieq3.1}). The measure $\mathrm{d}M$ denotes the (multiplicative) convolution inverse of $\mathrm{d}N$, or equivalently, in terms of its Mellin transform,
$$
\int_{1^{-}}^{\infty} x^{-s}\mathrm{d}M(x)=\frac{1}{\zeta(s)}\:.
$$
The estimate \eqref{gieq1.3} then takes the form
\[
m(x)=\int_{1^{-}}^{x}\frac{\mathrm{d}M(t)}{t}= o(1).
\]
\begin{theorem}
\label{gith3}
Suppose that $N$ has asymptotic density \eqref{gieq1.2}
and there are a discrete set of points $0<t_1<t_2<\dots$ and a corresponding set of constants $\beta_1,\beta_2,\dots<1$ such that for each $T>0$ there is $K_{T}>0$ such that
\begin{equation}
\label{gieq5.2}
\log|\zeta (\sigma+it)|>  -K_{T} -\log |\sigma-1+it|+ \sum_{0<t_n<T}\beta_n\log|\sigma-1+i(t-t_n)|
\end{equation}
holds for every $0<t<T$ and $1<\sigma<2$. Then, $m(x)=o(1)$ holds. 
\end{theorem}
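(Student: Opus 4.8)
The plan is to run the same Tauberian strategy as in Sections \ref{proof Diamond's theorem} and \ref{section generalization Diamond's theorem}, but now applied to the function $1/\zeta(s)$ and with Theorem \ref{F-RMellinth} (the Ingham--Fatou--Riesz extension) in place of the Wiener--Ikehara theorem. First I would recall that $m(x)=o(1)$ is equivalent, after the substitution $x=e^{y}$, to the statement $\int_{1}^{e^{y}} t^{-1}\,\mathrm{d}M(t)\to 0$; writing $\tau(y)=\int_{1}^{e^{y}} t^{-1}\mathrm{d}M(t)$ one checks that $\tau\in L^{1}_{loc}[1,\infty)$ and that $\tau$ is slowly decreasing --- this last point uses the hypothesis $N(x)\sim ax$, which controls the positive part of $\mathrm{d}M$ and is exactly the kind of one-sided Tauberian condition needed; I expect this verification to be short but not entirely trivial. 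The Mellin transform of $\tau$ is $\int_{1}^{\infty} x^{-s}\tau(x)\,\mathrm{d}x = \frac{1}{(s-1)\zeta(s)}$ for $\Re e\: s>1$, so by Theorem \ref{F-RMellinth} (with $a=0$ and $b=0$) it suffices to show that $\frac{1}{(s-1)\zeta(s)}$ has local pseudofunction boundary behavior on $\Re e\: s=1$.

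The next step is to produce that local pseudofunction boundary behavior by factoring $1/\zeta(s)$ the way $\zeta(s)$ was factored in the proof of Theorem \ref{gith2}. Since $N(x)\sim ax$ with $a>0$, Theorem \ref{W-IMellinth1} gives that $\zeta(s)-a/(s-1)$ has local pseudofunction boundary behavior on $\Re e\: s=1$; in particular $(s-1)\zeta(s)$ has a boundary value that is continuous and nonvanishing near $s=1$, so $\frac{1}{(s-1)\zeta(s)}$ has (at least) $A_{loc}$, indeed real-analytic, boundary behavior on a neighborhood $1+i(-\eta,\eta)$ of $1$. Away from $s=1$ the lower bound \eqref{gieq5.2} is what does the work: it says precisely that $\log|(s-1)\zeta(s)| \ge -K_T - \sum_{0<t_n<T}|\beta_n|\log(1/|\sigma-1+i(t-t_n)|)$-type control holds, i.e. $\Re e$ of $-\log\big((s-1)\zeta(s)\prod_{0<t_n<T}((s-1)^2+t_n^2)^{-\beta_n}\big)$ is bounded above on each truncated rectangle $(1,2)\times(\varepsilon,T)$ (and its mirror image), so that $\frac{1}{(s-1)\zeta(s)} = \exp(\text{that})\cdot\prod_{0<t_n<T}((s-1)^2+t_n^2)^{-\beta_n}$ has $L^{1}_{loc}$-boundary extension to $1+i(\mathbb{R}\setminus[-\eta/2,\eta/2])$ --- here one uses $\beta_n<1$ to guarantee the exponents $-\beta_n$ exceed $-1$, so each factor $((s-1)^2+t_n^2)^{-\beta_n}$ is locally $L^{1}$ across $1+it_n$. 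Patching the real-analytic piece near $s=1$ with the $L^{1}_{loc}$ piece elsewhere gives $L^{1}_{loc}$ boundary behavior of $\frac{1}{(s-1)\zeta(s)}$ on all of $\Re e\: s=1$, which is a special case of local pseudofunction boundary behavior.

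Finally, combining this with Theorem \ref{F-RMellinth} yields $\tau(y)=o(1)$, i.e. $m(x)=o(1)$, as claimed. The main obstacle I anticipate is not the analytic-function bookkeeping --- that mirrors Lemma \ref{gil1}, Lemma \ref{gil2} and the proof of Theorem \ref{gith2} almost verbatim --- but rather checking the hypotheses of Theorem \ref{F-RMellinth} cleanly: namely that $\tau$ is slowly decreasing. One has $m(x)-m(x/\eta)=\int_{x/\eta<t\le x} t^{-1}\mathrm{d}M(t)$, and since $\mathrm{d}M$ is a signed measure one must extract a one-sided bound from $N(x)\sim ax$; the natural route is to write $\mathrm{d}M = \mathrm{d}M_{+} - \mathrm{d}M_{-}$ via the relation $\mathrm{d}N * \mathrm{d}M=\delta$ and bound the variation of $M$ on dyadic-type blocks using $N(x)=O(x)$, giving $\int_{x/\eta}^{x} t^{-1}\,|\mathrm{d}M|(t)=O(\log\eta)$, which is $\le\varepsilon$ for $\eta$ close enough to $1$. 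I would present that estimate carefully, as it is the one genuinely new ingredient; everything downstream is an application of the machinery already assembled.
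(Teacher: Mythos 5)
Your plan is essentially the paper's proof: show $m$ is slowly oscillating/decreasing, identify $\int_1^\infty m(x)x^{-s}\,\mathrm{d}x = \frac{1}{(s-1)\zeta(s)}$, factor out the singular terms $((s-1)^2+t_n^2)^{-\beta_n}$ (with $\beta_n<1$ ensuring local integrability) and a bounded exponential, obtain $L^1_{loc}$-boundary extension, and apply Theorem \ref{F-RMellinth}. Two small remarks. First, the ``main obstacle'' you flag is in fact short: one has $|\mathrm{d}M|\le \mathrm{d}N$ (the Beurling M\"obius coefficients are bounded by $1$), so $\int_x^{cx} u^{-1}|\mathrm{d}M(u)| \le \int_x^{cx}u^{-1}\mathrm{d}N(u) = o(1)+a\log c$ by $N(x)\sim ax$, which already gives slow oscillation with no need to decompose $\mathrm{d}M$ into positive/negative parts on dyadic blocks. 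Second, your detour near $s=1$ is both unjustified and unnecessary: local pseudofunction boundary behavior of $\zeta(s)-a/(s-1)$ does \emph{not} by itself imply that $(s-1)\zeta(s)$ extends continuously (a pseudofunction is only a distribution), so the claim that $1/((s-1)\zeta(s))$ has real-analytic boundary behavior near $s=1$ is not established. But you don't need it: hypothesis \eqref{gieq5.2} already holds for all $0<t<T$, including $t$ near $0$, and since the terms $\log|\sigma-1+i(t\pm t_n)|$ with $t_n>0$ stay bounded there, \eqref{gieq5.2} gives a uniform lower bound on $|(s-1)\zeta(s)|$ on a punctured neighborhood of $s=1$; thus the paper treats the whole line at once by writing $\frac{1}{(s-1)\zeta(s)}=\exp(-I_T(s))\prod_{0<t_n<T}((s-1)^2+t_n^2)^{-\beta_n}$ and noting $-\Re\, I_T$ is bounded above on $(1,2)\times(-T,T)$, no separate patching required.
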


\begin{proof} First observe that
 $m(x)$ is slowly oscillating (in multiplicative sense),
\begin{align*}
\sup_{\eta\in[1,c]}|m(\eta x)-m(x)|&\leq \int_{x}^{cx}\frac{|\mathrm{d}M(u)|}{u}\leq \int_{x}^{cx}\frac{\mathrm{d}N(u)}{u}
\\
&
= \frac{N(cx)}{cx}-\frac{N(x)}{x}+\int_{x}^{cx}\frac{N(u)}{u^{2}}\:\mathrm{d}u
\\
&
= o(1)+a\log c, \quad x\to\infty.
\end{align*}
Writing 
\[
I_{T}(s)=\log \zeta(s)-\log(1/(s-1))-\sum_{0<t_n<T} \beta_n(\log (s- 1-it_n)+\log(s-1+it_n)),
\]
 the condition \eqref{gieq5.2} tells that $-\Re e\: I_{T}(\sigma+it)$ is bounded from above for $1<\sigma<2$ when $t$ stays on the interval $(-T,T)$. Now,
\[
\int_{1}^{\infty}\frac{m(x)}{x^{s}}\: \mathrm{d}x= \frac{1}{(s-1)\zeta(s)}= \exp(-I_{T}(s)) \prod_{0<t_n<T}((s-1)^{2}+t^2_n)^{-\beta_n} 
\]
has $L^{1}_{loc}$-boundary extension to $\Re e\: s=1$ and hence, by Theorem \ref{F-RMellinth}, $m(x)=o(1)$.
\end{proof}

A somewhat simpler sufficient condition for $m(x)=o(1)$, included in Theorem \ref{gith3}, is stated in the next corollary. 
\begin{corollary}
\label{gic1} If $N(x)\sim a x$ and $\log \zeta(s)-\log(1/(s-1))$ has a continuous extension to $\Re e\:s=1$, then the estimate $m(x)=o(1)$ holds.
\end{corollary}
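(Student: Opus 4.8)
The plan is to derive Corollary~\ref{gic1} as a direct specialization of Theorem~\ref{gith3}, since a continuous extension is a particular (in fact the simplest) instance of the kind of boundary control hypothesized there. Concretely, I would take the discrete set of points $0<t_1<t_2<\dots$ in Theorem~\ref{gith3} to be empty (or, if one prefers a nonempty index set, take all $\beta_n=0$), so that the sum $\sum_{0<t_n<T}\beta_n\log|\sigma-1+i(t-t_n)|$ vanishes identically and inequality \eqref{gieq5.2} reduces to
\[
\log|\zeta(\sigma+it)| > -K_T - \log|\sigma-1+it| \quad \text{for } 0<t<T,\ 1<\sigma<2.
\]
The content is therefore to check that the continuous extension hypothesis implies this lower bound on each horizontal strip $0<t<T$.

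The key step is the following observation. If $\log\zeta(s)-\log(1/(s-1))$ extends continuously to $\Re e\:s=1$, then on any compact subset of $\Re e\:s=1$ the real part of this difference is bounded, say by $K_T/2$ on the closure of $\{1<\sigma<2,\ 0<t<T\}$ after shrinking slightly; by continuity up to the boundary the bound persists on the open rectangle. Writing $\Re e\bigl(\log\zeta(s)\bigr)=\log|\zeta(s)|$ and $\Re e\bigl(\log(1/(s-1))\bigr)=-\log|s-1|=-\log|\sigma-1+it|$, this says precisely
\[
\log|\zeta(\sigma+it)| + \log|\sigma-1+it| \geq -K_T
\]
on the rectangle, which is \eqref{gieq5.2} with the empty sum. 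Hence all hypotheses of Theorem~\ref{gith3} are met, and $m(x)=o(1)$ follows. I would also remark in passing that one could instead invoke Corollary~\ref{gic1} from the structure already laid out in Section~\ref{section generalization Diamond's theorem}: a continuous extension is an $A_{loc}$-boundary value (indeed continuous functions are the prototypical elements of $A_{loc}$ only locally, so a tiny care is needed—continuity does give membership in $A_{loc}$ on relatively compact subsets via a smooth cutoff), but the cleanest route is simply to quote Theorem~\ref{gith3} with degenerate data.

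The main obstacle—really the only subtlety—is making sure the continuous extension genuinely yields a \emph{uniform} bound on $\Re e\:I_T$ up to the boundary line on each finite-height rectangle, rather than merely a pointwise finite boundary value. This is handled by a standard compactness argument: the continuous extension is defined and continuous on the closed half-plane $\Re e\:s\geq 1$ restricted to $|\Im e\:s|\leq T$, in particular on the compact rectangle $[1,2]\times[0,T]$ minus any neighborhood of the unique pole at $s=1$; but since we only subtract the pole's contribution $\log(1/(s-1))$, the difference is continuous on the full compact rectangle including $s=1$, hence bounded there. I would phrase the proof in two or three sentences, noting that it is immediate from Theorem~\ref{gith3} upon taking the index set of exceptional points to be empty, and that the required inequality \eqref{gieq5.2} is exactly the statement that $\Re e\bigl(\log\zeta(s)-\log(1/(s-1))\bigr)$ is locally bounded on $\Re e\:s=1$, which a continuous boundary extension trivially provides.
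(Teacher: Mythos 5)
Your proof is correct and matches the paper's intended route: Corollary~\ref{gic1} is the degenerate specialization of Theorem~\ref{gith3} with the index set $\{t_n\}$ taken empty, since continuity of $\log\zeta(s)-\log(1/(s-1))$ up to $\Re e\:s=1$ gives by compactness a uniform lower bound $\Re e\bigl(\log\zeta(s)+\log(s-1)\bigr)\geq -K_T$ on the rectangle $(1,2)\times(0,T)$, i.e.\ inequality \eqref{gieq5.2} with no correction terms; the paper signals exactly this reading by saying the corollary is ``included in Theorem~\ref{gith3}.''

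One caution about the parenthetical aside. It is not true that a continuous boundary function lies in $A_{loc}$, even after multiplication by a smooth cutoff: the Wiener algebra $A(\mathbb{R})=\mathcal{F}(L^1(\mathbb{R}))$ is a proper subset of $C_0(\mathbb{R})$ (there are continuous, compactly supported functions whose Fourier transforms are not integrable, e.g.\ ones built from lacunary series), so the inclusion runs only in the direction $A_{loc}\subset C$, not conversely. The alternative route you gesture at---treating a continuous extension as an $A_{loc}$-boundary value and then invoking the machinery of Section~\ref{section generalization Diamond's theorem}---therefore does not go through. This gap is precisely what makes Corollary~\ref{gic1} a genuine weakening of Corollary~\ref{gic2}: it is exploitable only through the one-sided lower-bound hypothesis \eqref{gieq5.2} of Theorem~\ref{gith3}, which asks far less of the boundary value than membership in the Wiener algebra. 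Since your main argument rests solely on the specialization of Theorem~\ref{gith3}, this slip in the aside does not affect the proof's validity.
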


In view of Theorem \ref{gith2}, the hypotheses of Corollary \ref{gic1} are fulfilled if $\log \zeta(s)-\log(1/(s-1))$ has $A_{loc}$-boundary behavior on $\Re e\:s=1$. In particular,

\begin{corollary}
\label{gic2} 
The condition \eqref{gieq1.1} implies the estimate $m(x)=o(1)$.
\end{corollary}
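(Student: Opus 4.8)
The plan is to derive Corollary \ref{gic2} as an immediate consequence of Corollary \ref{gic1}, so the only thing that needs to be checked is that the $L^1$ hypothesis \eqref{gieq1.1} implies both that $N$ has positive asymptotic density and that $\log\zeta(s)-\log(1/(s-1))$ extends continuously (in fact with $A_{loc}$-boundary behavior) to the line $\Re e\: s=1$. The first of these is precisely Theorem \ref{gith1}, which has already been established in Section \ref{proof Diamond's theorem}. For the second, I would recall the computation carried out in Section \ref{proof Diamond's theorem}: with $\Pi_0$ as in \eqref{gieqcanonicalpi} and $J$ as in \eqref{gieq3.3}, one has from \eqref{gieq3.1} the identity
\[
\log\zeta(s)-\log\left(\frac{1}{s-1}\right)=\log s + s\,J(s),\qquad \Re e\: s>1,
\]
since $\int_1^\infty x^{-s}\,\mathrm{d}\Pi_0(x)=\log(s/(s-1))$. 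The factor $\log s$ is analytic across $\Re e\: s=1$, and $s$ is entire, so it suffices to control $J$.

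The key observation — already isolated in the proof of Theorem \ref{gith1} — is that hypothesis \eqref{gieq1.1}, equivalently \eqref{gieq3.2}, gives not merely that $J$ extends continuously to $\Re e\: s=1$ but that $J(1+it)\in A(\mathbb{R})$, the Wiener algebra. Indeed, \eqref{gieq3.3} exhibits $sJ(s)$, up to the harmless prefactor, as essentially the Laplace transform of the function $u\mapsto u^{-2}(\Pi(u)-\Pi_0(u))$ after the substitution $u=e^y$, and \eqref{gieq3.2} says precisely that this function lies in $L^1$ in the variable $y$; its Fourier transform is therefore in $A(\mathbb{R})$, and multiplying by the smooth multiplier coming from $s=1+it$ keeps us in $A_{loc}(\mathbb{R})$. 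Consequently $\log\zeta(s)-\log(1/(s-1))$ has $A_{loc}$-boundary behavior on $\Re e\: s=1$, and in particular a continuous extension there.

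Having verified both hypotheses of Corollary \ref{gic1}, the conclusion $m(x)=o(1)$ follows at once. I would present the argument essentially in the two sentences above, since every substantive step has been done earlier in the paper: the asymptotic density from Theorem \ref{gith1}, the membership $J(1+it)\in A(\mathbb{R})$ from the proof of that theorem, and the passage from $A_{loc}$-boundary behavior to continuous boundary behavior being trivial. There is no real obstacle here; the only point requiring a line of care is reminding the reader why \eqref{gieq1.1} yields the $A(\mathbb{R})$ (and not merely continuity) property of $J$ on the critical line, which is exactly the ingredient that makes Corollary \ref{gic1} applicable. One might alternatively remark that this instance is also subsumed by combining Theorem \ref{gith2} (whose hypothesis (a) is met, with the $\beta_n$'s absent) with Theorem \ref{gith3}, but the route through Corollary \ref{gic1} is the most economical.
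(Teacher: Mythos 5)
Your proposal is correct and follows essentially the same route as the paper: both reduce to Corollary \ref{gic1} by observing that \eqref{gieq1.1} gives $J(1+it)\in A(\mathbb{R})$ (hence $A_{loc}$-boundary behavior, in particular continuity, of $\log\zeta(s)-\log(1/(s-1))$ on $\Re e\,s=1$) and that the density follows from this. The only cosmetic difference is that you invoke Theorem \ref{gith1} directly for the density while the paper routes through Theorem \ref{gith2}, which is the same chain of facts.
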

In addition, a less restrictive set of hypotheses  for $m(x)=o(1)$ than \eqref{gieq1.1} is provided by \eqref{gieq4.3} from Corollary \ref{gic3} but with \eqref{gieq4.4} strengthened as follows:

\begin{corollary}
\label{gic4}
If \eqref{gieq4.3} holds with distinct $t_{j}>0$ and
\begin{equation}
\label{gieq4.3.1}
 \quad (1+t_{j}^{2})^{1/2}|b_j \cos(y_j+ \arctan t_j)|<2, \quad j=1,\dots,k,
\end{equation}
then $m(x)=o(1)$.
\end{corollary}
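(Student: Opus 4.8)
\textbf{Proof plan for Corollary \ref{gic4}.}
The plan is to follow the same route as in the proof of Corollary \ref{gic3}, but to feed the resulting information about $\log\zeta$ into Theorem \ref{gith3} rather than Theorem \ref{gith2}. First I would record what \eqref{gieq4.3} gives on the zeta side. Writing $\theta_j=y_j+\arctan t_j$ and using $\int_1^\infty x^{-s}\,\mathrm{d}\Pi_0(x)=\log(s/(s-1))$ together with the elementary identity $\int_1^\infty x^{-s}\,\frac{x}{\log x}\cos(t\log x+y)\,\mathrm{d}x = \frac{1}{2}\bigl(e^{iy}\log\frac{s-1-it-?}{\,\cdot\,}\bigr)$ — more precisely the same computation as in Corollary \ref{gic3} — the hypothesis \eqref{gieq4.3} implies that
\[
\log \zeta(s)+\log(s-1)+\sum_{j=1}^{k} \frac{b_j(1+t_j^2)^{1/2}}{2}\Bigl(e^{i\theta_j}\log(s-1-it_j)+e^{-i\theta_j}\log(s-1+it_j)\Bigr)
\]
has $A_{loc}$-boundary behavior, hence in particular a continuous (indeed $L^1_{loc}$) boundary extension, on $\Re e\:s=1$. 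Write $\gamma_j=b_j(1+t_j^2)^{1/2}\cos\theta_j$ for the real part of the coefficient; note the imaginary parts $b_j(1+t_j^2)^{1/2}\sin\theta_j$ also appear but, being multiplied by $i$ times a logarithm, they only affect $\arg\zeta$, not $\log|\zeta|$.

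Next I would extract the lower bound on $\log|\zeta|$ required by \eqref{gieq5.2}. Taking real parts in the displayed relation above, for $1<\sigma<2$ and $t$ in a bounded interval $(-T,T)$ one gets
\[
\log|\zeta(\sigma+it)| = -\log|\sigma-1+it| - \sum_{j=1}^{k}\frac{\gamma_j}{2}\log\bigl((\sigma-1)^2+(t-t_j)^2\bigr)^{1/2}\cdot 2 \;+\; O(1),
\]
where I have paired the $\pm t_j$ logarithms using $\log|s-1-it_j|+\log|s-1+it_j| = \log\bigl((\sigma-1)^2+(t-t_j)^2\bigr)$ for real $t$ when $t_j>0$ (and similarly for $-t_j$); the $O(1)$ absorbs the continuous boundary function, which is bounded on compacts. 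This is exactly an inequality of the shape \eqref{gieq5.2} with the discrete set $\{t_j\}\cup\{-t_j\}$ (or just $\{t_j\}$ after symmetrizing, as in Theorem \ref{gith3}) and exponents $\beta_j=-\gamma_j/?$ — here one must be careful with the bookkeeping, since each unordered pair $\{t_j,-t_j\}$ contributes a factor $\log((\sigma-1)^2+(t-t_j)^2) = 2\log|\sigma-1+i(t-t_j)|$, so the relevant exponent in \eqref{gieq5.2} is $\beta_j = -\gamma_j$. The condition $\beta_j<1$ of Theorem \ref{gith3} then reads $\gamma_j>-1$; but \eqref{gieq4.3.1} gives the stronger $|\gamma_j|<2$, in particular $\gamma_j>-1$ (indeed $\gamma_j>-2$ would already suffice for this half), so the hypothesis of Theorem \ref{gith3} is met. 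Since $N(x)\sim ax$ is assumed (it follows from \eqref{gieq4.3} and \eqref{gieq4.3.1} via Corollary \ref{gic3}, because \eqref{gieq4.3.1} implies \eqref{gieq4.4}), all hypotheses of Theorem \ref{gith3} hold and $m(x)=o(1)$ follows.

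The step needing the most care — and the place where the full strength $|\gamma_j|<2$ rather than merely $\gamma_j>-1$ is used — is reconciling the two one-sided estimates. Theorem \ref{gith3} only needs a \emph{lower} bound on $\log|\zeta|$, i.e. it only sees the $\beta_j$ with the ``bad'' sign; but to invoke Theorem \ref{gith3} one must also know $N(x)\sim ax$, and that in turn (through Corollary \ref{gic3}/Theorem \ref{gith2}) requires controlling $\log|\zeta|$ from \emph{above}, which brings in $-\gamma_j$ with the opposite sign and hence the constraint $-\gamma_j>-2$, i.e. $\gamma_j<2$. Combining the two one-sided needs — $\gamma_j>-2$ for the $m$-estimate directly, $\gamma_j<2$ for the density — is precisely the symmetric condition $|\gamma_j|<2$ of \eqref{gieq4.3.1}. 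I would therefore present the proof as: (i) \eqref{gieq4.3}+\eqref{gieq4.3.1} $\Rightarrow$ \eqref{gieq4.4} $\Rightarrow$ $N(x)\sim ax$ by Corollary \ref{gic3}; (ii) the $A_{loc}$-boundary relation above $\Rightarrow$ the lower bound \eqref{gieq5.2} with $\beta_j=-\gamma_j<1$; (iii) apply Theorem \ref{gith3}. The only genuinely delicate point is the constant bookkeeping in (ii) — getting the factor of $2$ and the sign of $\beta_j$ right so that \eqref{gieq4.3.1} is exactly what is consumed — and I would double-check it against the computation already carried out in the proof of Corollary \ref{gic3}.
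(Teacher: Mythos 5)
Your overall strategy is exactly what the paper intends: (i) use \eqref{gieq4.3.1} $\Rightarrow$ \eqref{gieq4.4} together with Corollary \ref{gic3} to get $N(x)\sim ax$; (ii) take real parts of the $A_{loc}$-boundary relation from Corollary \ref{gic3}'s proof to produce a lower bound of the shape \eqref{gieq5.2}; (iii) feed this into Theorem \ref{gith3}. The paper presents Corollary \ref{gic4} without a written proof, precisely because it is meant to follow this route, so you have identified the intended argument.

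However, the bookkeeping in step (ii) — the very step you flag as delicate — is wrong, and the way you patch it over is also wrong, so as written the argument does not close. The identity you invoke, $\log|s-1-it_j|+\log|s-1+it_j|=\log\bigl((\sigma-1)^2+(t-t_j)^2\bigr)$, is false: one has $\log|s-1-it_j|=\tfrac12\log\bigl((\sigma-1)^2+(t-t_j)^2\bigr)$ and $\log|s-1+it_j|=\tfrac12\log\bigl((\sigma-1)^2+(t+t_j)^2\bigr)$. For $t$ near $t_j>0$ only the first term is singular; the second is bounded. Hence the coefficient of $\log|\sigma-1+i(t-t_j)|$ in $-\log|\zeta|$ is $\gamma_j/2$, not $\gamma_j$, i.e.\ $\beta_j=-\gamma_j/2$. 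Theorem \ref{gith3}'s requirement $\beta_j<1$ then reads $\gamma_j>-2$, which \eqref{gieq4.3.1} does supply. Your version sets $\beta_j=-\gamma_j$ and needs $\gamma_j>-1$, and you then assert that $|\gamma_j|<2$ gives ``in particular $\gamma_j>-1$''; this implication is false ($\gamma_j=-1.5$ is a counterexample), so if your $\beta_j$ were correct the hypothesis \eqref{gieq4.3.1} would be insufficient. The two mistakes do not cancel; both must be fixed. With $\beta_j=-\gamma_j/2$ the condition becomes $\gamma_j>-2$, the symmetric counterpart $\gamma_j<2$ is what Theorem \ref{gith2} uses for the density, and $|\gamma_j|<2$ is exactly the conjunction, as you correctly anticipated at the end of your write-up.

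One further small imprecision: the terms coming from $\sin\theta_j$ do contribute to $\Re e$ (through $-\sin\theta_j\,\arg(s-1\mp it_j)$), not only to $\arg\zeta$; they are harmless because $\arg$ is bounded, not because they vanish from the real part.
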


Corollary \ref{gic2} recovers a result of Kahane and Sa\"{i}as. Their formulation from \cite{Kahane-Saias2016a} is slightly different, making use of the Liouville function. Define $\mathrm{d}L$ via its Mellin transform as
$$
\int_{1^{-}}^{\infty} x^{-s}\mathrm{d}L(x)=\frac{\zeta(2s)}{\zeta(s)} .
$$
Using elementary convolution arguments as in classical number theory, it is not hard to verify that $m(x)=o(1)$ is always equivalent to
\begin{equation}
\label{gieq5.3}
\ell(x)=\int_{1^{-}}^{x}\frac{\mathrm{d}L(u)}{u}=o(1),
\end{equation}
which is the relation that Kahane and Sa\"{i}as established in \cite{Kahane-Saias2016a,Kahane-Saias2016b} under the hypothesis \eqref{gieq1.1}. Note that for discrete generalized number systems \eqref{gieq5.3} takes the familiar form
$$
\sum_{k=0}^{\infty}\frac{\lambda(n_k)}{n_k}=0 .
$$

Finally, we point out that other sufficient conditions for $m(x)=o(1)$ are known. The validity of (\ref{gieq1.3}) has been proved in \cite[Cor. 3.1]{Debruyne-Diamond-Vindas} under a Chebyshev upper estimate hypothesis, that is,
\begin{equation}
\label{gieq5.4}
 \limsup_{x\to\infty} \frac{\Pi(x)\log x}{x}<\infty ,
\end{equation}
and the condition
\begin{equation}
\label{gieq5.5}
\int_{1}^{\infty}\frac{|N(x)-ax|}{x^{2}}\:\mathrm{d}x<\infty.
\end{equation}

We end this article with some examples that compare the different sets of hypotheses we have discussed here for $m(x)=o(1)$. In addition to these examples, observe that Corollary \ref{gic1} applies to Example \ref{giex1}, but Corollary \ref{gic2} does not because (\ref{gieq1.1}) fails for it. 

\begin{example}
\label{giex5} In this example we provide an instance of a generalized number system for which (\ref{gieq1.1}) holds (so that Corollary \ref{gic2} applies to deduce $m(x)=o(1)$), the Chebyshev upper estimate is satisfied, but the hypothesis (\ref{gieq5.5}) of  \cite[Cor. 3.1]{Debruyne-Diamond-Vindas} does not hold.

Let
$$
\mathrm{d}\Pi(u)= \frac{1-u^{-1}}{\log u}\mathrm{d}u+  \left(\frac{1-u^{-1}}{\log u}\right)^{2}\omega(u)\mathrm{d}u,
$$
where 
$\omega$ is non-increasing positive function on $[1,\infty)$ such that 
\begin{equation}
\label{gieq5.6}
\int_{2}^{\infty}\frac{\omega(x)}{x\log x}\mathrm{d}x=\infty, \quad \frac{\omega(x^{1/n})}{\omega(x)}\leq Cn^{\alpha}, \quad \mbox{and}\quad \omega(x)=o(1),
\end{equation}
with $C,\alpha>0$. (For example, $\omega(x)=1/\log\log x$ for $x\geq e^{e}$ and $\omega(x)=1$ for $x\in [1,e^{e}]$ satisfies these requirements.) Since $\omega$ is non-increasing, one readily verifies that the PNT
\[
\Pi(x)=\frac{x}{\log x}+O\left(\frac{x}{\log^{2} x}\right)
\]
holds. Thus, both (\ref{gieq5.4}) and (\ref{gieq1.1}) are satisfied; in particular, $N(x)\sim ax$ for some $a>0$, by Theorem \ref{gith1}. We have shown in \cite[Ex. 1]{d-vPNTequiv2016} the lower bound
\[
\int_{x}^{\infty}\frac{\omega(u)}{u\log^{2} u}\:\mathrm{d}u \ll \frac{ax-N(x)}{x}\:,
\]
Dividing through by $x$, integrating on $[1,\infty)$, exchanging the order of integration, and using the first condition in (\ref{gieq5.6}), we obtain that
$$
\int_{1}^{\infty}\frac{|ax-N(x)|}{x^{2}}\:\mathrm{d}x=\infty.
$$

\end{example}

\begin{example}
\label{giex6} We consider an example constructed by Kahane in \cite{kahane1998}. Let $1<a_1<a_2<\dots$ and $0<b_1<b_2<\dots$ be two sequences such that $b_{j}<a_{j+1}-a_{j}$, $\lim_{j\to\infty}b_{j}=\infty$, and
$$
\sum_{j=1}^{\infty} \frac{b_j^2}{a_j}<\infty.
$$
Define the prime measure
$$
\mathrm{d} \Pi (u)= \frac{1}{\log u}\chi_{[2,\infty)}(u)\mathrm{d}u+ \sum_{j=1}^{\infty} e^{a_{j}}\log \left(1+\frac{b_j}{a_j}\right)\delta(u-e^{a_{j}})- \frac{1}{\log u}\chi_{[e^{a_{j}},e^{a_{j}+b_{j}})}(u)\mathrm{d}u,
$$
where $\delta(u)$ is the Dirac delta measure and $\chi_{B}$ is the characteristic function of a set $B$. For this example, it is essentially shown in \cite[pp. 631--634]{kahane1998} that (\ref{gieq5.5}) holds for some $a>0$, 
$$
\limsup_{x\to\infty} \frac{\Pi(x)\log x}{x}=\infty,
$$
but $\Pi$ satisfies (\ref{gieq1.1}) (so that, once again, Corollary \ref{gic2} applies here, but \cite[Cor. 3.1]{Debruyne-Diamond-Vindas} does not).

\end{example}

\begin{example}
\label{giex7} For Example \ref{giex2}, the hypotheses of both Corollary \ref{gic1} and \cite[Cor. 3.1]{Debruyne-Diamond-Vindas} are violated, but Corollary \ref{gic4} still applies to conclude $m(x)=o(1)$. We already saw that (\ref{gieq1.1}) does not hold.  Because of \eqref{gieq4.5}, one has Chebyshev lower and upper estimates, 
$$
\frac{x}{\log x}\ll \Pi(x)\ll \frac{x}{\log x},
$$
but (\ref{gieq5.5}) fails. This can be proved by looking at the zeta function, which is given by 
$$
\zeta(s)= \frac{e^{G(s)}}{(s-1)\sqrt{1+(s-1)^{2}}}\:,
$$
where $G(s)$ is an entire function. Since $\zeta(s)$ is unbounded at  $s=1\pm i$ and (\ref{gieq5.5}) would force continuity at those points, we must have
\[
\int_{1}^{\infty}\frac{|N(x)-ax|}{x^{2}}\: \mathrm{d}x=\infty.
\]
Actually, $\log \zeta(s)-\log(1/(s-1))$ does not have a continuous extension to $\Re e\:s=1$, so that Corollary \ref{gic1} does not apply to this example either. 
\end{example}

\end{document}